\newcommand{\mc}[1]{\mathcal{#1}}
\renewcommand{\v}{\textup{\textsf{v}}}
\newcommand{\dist}{\textup{\textsf{dist}}}
\newcommand{\dash}{\text{-}}
\theoremstyle{plain}
\newtheorem{thm}{Theorem}[section]
\newtheorem{lem}[thm]{Lemma}
\newtheorem{conj}[thm]{Conjecture}
\newtheorem{claim}{Claim}
\theoremstyle{definition}
\newcommand{\N}{\mathbb{N}}
\begin{document}

\title{On an induced version of Menger's theorem}

\thanks{The first author is supported by the Institute for Basic Science (IBS-R029-C1). The second and fourth authors are supported by the Natural Sciences and Engineering Research Council of Canada (NSERC). Les deuxi\`eme et quatri\`eme auteurs sont supportés par le Conseil de recherches en sciences naturelles et en génie du Canada (CRSNG). The third author is funded by an ETH Z\"{u}rich Postdoctoral Fellowship.}

\subjclass[2020]{Primary 05C38; Secondary 05C15, 05C40, 05C83}
\keywords{Menger's theorem, Non-adjacent paths, Strong edge colouring, Bounded maximum degree, Forbidden topological minor}

\author{Kevin Hendrey}
\address{Discrete Mathematics Group, Institute for Basic Science (IBS), Daejeon, South Korea}
\email{kevinhendrey@ibs.re.kr}
\urladdr{sites.google.com/view/kevinhendrey/}

\author{Sergey Norin}
\address{Department of Mathematics and Statistics, McGill University, Montr\'eal, Canada}
\email{snorin@math.mcgill.ca}
\urladdr{www.math.mcgill.ca/snorin/}

\author{Raphael Steiner}
\address{Institute of Theoretical Computer Science, Department of Computer Science, ETH Z\"{u}rich, Switzerland}
\email{raphaelmario.steiner@inf.ethz.ch}
\urladdr{sites.google.com/view/raphael-mario-steiner/}

\author{J\'er\'emie Turcotte}
\address{Department of Mathematics and Statistics, McGill University, Montr\'eal, Canada}
\email{mail@jeremieturcotte.com}
\urladdr{www.jeremieturcotte.com}

\begin{abstract}
	We prove Menger-type results in which the obtained paths are pairwise non-adjacent, both for graphs of bounded maximum degree and, more generally, for graphs excluding a topological minor. We further show better bounds in the subcubic case, and in particular obtain a tight result for two paths using a computer-assisted proof.
\end{abstract}

\maketitle

\section{Introduction}

Given a graph $G$ and $X,Y\subseteq V(G)$, we say a set of vertices $Z$ \emph{separates} $X$ and $Y$ if $Z$ intersects every $X\dash Y$-path\footnote{An $X$-$Y$-path is defined as a path that has one endpoint in $X$ and the opposite endoint in $Y$. Notably, this definition also allows a single vertex in $X\cap Y$ to qualify as an $X$-$Y$-path (of length $0$).}. In general, we say two paths are disjoint if they do not share any vertices. Menger's theorem is a fundamental result of graph theory, relating the existence of many disjoint paths between two sets of vertices in a graph with the absence of small separators. 

\begin{thm}[Menger's theorem \cite{menger_zur_1927}]\label{thm:menger}
    If $k\in \N$, $G$ is a graph and $X,Y\subseteq V(G)$, then there exists either
    \begin{enumerate}[label=\normalfont(\arabic*)]
        \item $k$ pairwise disjoint $X\dash Y$-paths, or
        \item a set of less than $k$ vertices which separates $X$ and $Y$.
    \end{enumerate}
\end{thm}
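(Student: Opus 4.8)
The plan is to prove the contrapositive form of the dichotomy: I will show that \emph{if no set of fewer than $k$ vertices separates $X$ and $Y$, then there are $k$ pairwise disjoint $X\dash Y$-paths}; this is exactly the assertion ``not (2) implies (1)'', and hence equivalent to the theorem. It suffices to treat the case where $k$ is the \emph{minimum} size of a set separating $X$ and $Y$, so that the goal becomes producing exactly $k$ pairwise disjoint $X\dash Y$-paths (for a smaller value of $k$ one just discards some paths); the case $k=0$ is trivial, so I assume $k\geq 1$, and I note that a minimum separating set exists since $X$ and $Y$ are themselves separating sets. I proceed by induction on $|V(G)|+|E(G)|$.

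\textbf{Case A: some minimum separating set $S$ satisfies $S\neq X$ and $S\neq Y$.} Here I split $G$ along $S$: let $G_X$ be the subgraph induced by $S$ together with all components of $G-S$ that meet $X$, and define $G_Y$ symmetrically. Three facts drive the argument: (i) since $S$ separates $X$ and $Y$, no component of $G-S$ meets both, so $V(G_X)\cap V(G_Y)=S$; (ii) $G_X$ and $G_Y$ are \emph{proper} subgraphs of $G$, because if $G_X$ used all of $V(G)$ then every component of $G-S$ would meet $X$, forcing $Y\subseteq S$ (otherwise a component meeting $Y$ would meet $X$, giving an $X\dash Y$-path avoiding $S$) and hence $Y=S$ as $Y$ is a separating set of size $\geq k=|S|$, contradicting the case hypothesis; and (iii) no set of fewer than $k$ vertices separates $X$ from $S$ in $G_X$, since the segment of any $X\dash Y$-path of $G$ up to its first vertex of $S$ lies in $G_X$ and is an $X\dash S$-path, so such a separator of $X$ and $S$ in $G_X$ would separate $X$ and $Y$ in $G$. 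By (ii) and (iii) the induction hypothesis applies to $G_X$ (with terminal sets $X$ and $S$) and to $G_Y$ (with $S$ and $Y$), yielding $k$ pairwise disjoint $X\dash S$-paths in $G_X$ and $k$ pairwise disjoint $S\dash Y$-paths in $G_Y$. Truncating each path at its first vertex of $S$ and using $|S|=k$, each of the two families meets $S$ in precisely $S$, with every path meeting $S$ only in its $S$-endpoint; concatenating, for each $s\in S$, the $X\dash S$-path and the $S\dash Y$-path through $s$ — which by (i) overlap only in $s$ — produces $k$ pairwise disjoint $X\dash Y$-paths in $G$.

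\textbf{Case B: every minimum separating set equals $X$ or $Y$.} If $E(G)=\emptyset$ then every $X\dash Y$-path is a single vertex of $X\cap Y$, so $X\cap Y$ is the unique minimal separating set; by hypothesis it equals $X$ or $Y$, say $X\subseteq Y$, so $X=X\cap Y$ has $k$ vertices, and these form $k$ disjoint $X\dash Y$-paths. Otherwise pick an edge $e=uv$. If no set of fewer than $k$ vertices separates $X$ and $Y$ in $G-e$, the induction hypothesis applied to $G-e$ (which has one fewer edge) gives $k$ disjoint $X\dash Y$-paths already inside $G-e\subseteq G$. Otherwise $G-e$ has a separating set $S$ of size exactly $k-1$ (it cannot be smaller, since $S\cup\{u\}$ would then separate $X$ and $Y$ in $G$ with fewer than $k$ vertices), and there is an $X\dash Y$-path of $G$ avoiding $S$ which necessarily uses $e$; hence $u,v\notin S$ and both $S\cup\{u\}$ and $S\cup\{v\}$ separate $X$ and $Y$ in $G$ with exactly $k$ vertices. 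These are distinct minimum separating sets, so by hypothesis $\{S\cup\{u\},S\cup\{v\}\}=\{X,Y\}$; this forces $X\cap Y=S$ and exhibits $u,v$ (with $u\in X\setminus Y$ and $v\in Y\setminus X$) together with the edge $e$ as an $X\dash Y$-path disjoint from $S$. Adjoining the $k-1$ one-vertex paths on $S=X\cap Y$ yields $k$ pairwise disjoint $X\dash Y$-paths.

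The base case and the reduction are immediate, and Case B is short; the main obstacle is the bookkeeping in Case A. The crucial and slightly delicate points there are: ensuring the induction is well-founded, which is precisely why one wants a minimum separating set \emph{distinct from both} $X$ and $Y$ (so that $G_X$ and $G_Y$ are proper subgraphs and $|V|+|E|$ strictly decreases); checking that separating sets do not shrink when passing to $G_X$ or $G_Y$; and verifying disjointness of the concatenated paths, which rests on $V(G_X)\cap V(G_Y)=S$ together with each truncated path meeting $S$ only at its endpoint.
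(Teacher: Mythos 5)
Your proposal is correct, but there is nothing in the paper to compare it against: Theorem~\ref{thm:menger} is Menger's classical theorem, which the paper quotes with a citation and uses as a black box, giving no proof of its own. What you have written is essentially the standard inductive proof of the set version of Menger's theorem: induction on $\v(G)+\e(G)$, splitting the graph along a minimum $X$--$Y$ separator $S$ distinct from both $X$ and $Y$ when one exists, and otherwise deleting an edge $e=uv$ and exploiting that $S\cup\{u\}$ and $S\cup\{v\}$ are then two distinct minimum separators, hence $\{X,Y\}$. I checked the delicate points and they all go through: the properness of $G_X$ and $G_Y$ (your argument that otherwise $Y=S$ or $X=S$ also covers the degenerate case $S=V(G)$); the fact that an $X\dash S$ separator of $G_X$ of size less than $k$ would already separate $X$ and $Y$ in $G$, so the induction hypothesis applies with the same $k$; the gluing of the two truncated path families, which is justified by $V(G_X)\cap V(G_Y)=S$ together with each truncated path meeting $S$ only in its endpoint and $|S|=k$; and the edgeless subcase of Case~B. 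Your argument is also consistent with the paper's conventions, namely that an $X\dash Y$-path may be a single vertex of $X\cap Y$ and that separating sets may meet $X\cup Y$ (which is exactly what makes $X$ and $Y$ themselves separators, a fact you use to guarantee a minimum separator exists and in the $Y\subseteq S$ step). So the proof is sound; it simply supplies a classical argument the authors deliberately omitted.
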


It is a natural question to ask under which circumstances we can guarantee the paths in point (1) to be far apart from each other in the graph metric, rather than just disjoint. Georgakopoulos and Papasoglu \cite{georgakopoulos_graph_2023}, motivated by questions in metric geometry, and Albrechtsen et al.\ \cite{albrechtsen_induced_2023}, have conjectured the following ``Coarse Menger's theorem''.

\begin{conj}\label{conj:coarse}
    For every $k\in \N$, there exists $c=c(k)\in \N$ satisfying the following. If $d\in \N$, $G$ is a graph and $X,Y\subseteq V(G)$, then there exists either
    \begin{enumerate}[label=\normalfont(\arabic*)]
        \item $k$ disjoint $X\dash Y$-paths $P_1,\dots,P_k$ such that $\dist_G(P_i,P_j) \geq d$ for all distinct $i,j$, or
        \item a set $Z \subseteq V(G)$ of size less than $k$ such that $B_G(Z,cd)$ separates $X$ and $Y$.
    \end{enumerate}
\end{conj}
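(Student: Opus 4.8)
The plan is to attack \cref{conj:coarse} by induction on $k$, bootstrapping off ordinary Menger (\cref{thm:menger}). For $k=1$ there is nothing to do: either some $X\dash Y$-path exists, giving outcome (1) with the distance condition vacuous, or none does, and $Z=\emptyset$ gives outcome (2). For the inductive step, fix $k\ge 2$ and a constant $c(k-1)$ witnessing the case $k-1$, and let $G$, $X$, $Y$, $d$ be given. Apply \cref{thm:menger}: if it returns a set $S$ with $|S|<k$ separating $X$ and $Y$, then since $S\subseteq B_G(S,cd)$ and any superset of a separator is a separator, outcome (2) holds for any $c\ge 1$ and we are done. So we may assume there are $k$ pairwise \emph{disjoint} $X\dash Y$-paths, and the task is to upgrade disjointness to pairwise distance at least $d$, or to find a ball around fewer than $k$ vertices that separates.

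To make the induction go through I would prove a \emph{relative} strengthening: we are additionally given a ``forbidden region'' $W\subseteq V(G)$, and we ask for either (1) $k$ disjoint $X\dash Y$-paths in $G-W$ that are pairwise at distance at least $d$ \emph{measured in $G$} (measuring distance in the host graph, not in $G-W$, is essential), or (2) a set $Z$ with $|Z|<k$ such that $B_G(Z,cd)\cup W$ separates $X$ and $Y$ in $G$. The step is then: take one of the $k$ disjoint $X\dash Y$-paths in $G-W$, call it $P$, put $W':=W\cup B_G(V(P),d-1)$, and invoke the relative statement for $k-1$ on $(G,X,Y,d,W')$. In its outcome (1) we get $k-1$ disjoint $X\dash Y$-paths in $G-W'$, pairwise at distance at least $d$ in $G$; since they avoid $B_G(V(P),d-1)$, each is also at distance at least $d$ from $P$ in $G$, so adding $P$ yields the required $k$ paths. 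That half of the argument is routine bookkeeping.

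The obstacle, which I expect to be the decisive one, is outcome (2) of the recursive call. It produces a set $Z'$ with $|Z'|<k-1$ such that $B_G(Z',c(k-1)d)\cup W\cup B_G(V(P),d-1)$ separates $X$ and $Y$, whereas we must produce a set of size less than $k$ whose $c(k)d$-ball, together with $W$, separates. This forces us to absorb $B_G(V(P),d-1)$ — the $(d-1)$-neighbourhood of a path that may be arbitrarily long — into the $O(d)$-ball around a single new vertex, which is simply false in general, since a long path's $d$-neighbourhood need not lie in any ball of bounded radius. Thus naive peeling fails for arbitrary $G$, and so does any argument that ``charges'' the region deleted at each stage to a bounded vertex set; this is exactly the pressure point that makes \cref{conj:coarse} hard.

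Two escape routes suggest themselves. One imposes extra structure on $G$ guaranteeing that the region deleted at each step can be rerouted around, or is dominated by, a bounded number of vertices; this succeeds for graphs of bounded maximum degree and, more generally, for graphs excluding a fixed topological minor — it is what the main theorems of this paper actually deliver — but it does not resolve \cref{conj:coarse} as stated for all $G$. The other, aimed squarely at arbitrary $G$, is a global argument in the spirit of the grid-minor theorem: from the assumption that no $cd$-ball around fewer than $k$ vertices (together with $W$) separates $X$ from $Y$, one would try to extract a substructure that is \emph{robustly} highly connected at scale $d$ — a coarse analogue of a bramble or of a well-linked set, certified by a family of $X\dash Y$-paths that no $k-1$ balls of radius $cd$ can collectively meet — and route $k$ pairwise $d$-separated paths through it. Building and exploiting such a scale-$d$ bramble in an arbitrary graph is where I expect the genuine difficulty to sit, and I would be candid that it is unclear whether such a structure exists in full generality — which is precisely why \cref{conj:coarse} is posed as a conjecture rather than settled here unconditionally.
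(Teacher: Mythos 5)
There is no proof in the paper to compare against: \cref{conj:coarse} is stated as an open conjecture (due to Georgakopoulos--Papasoglu and Albrechtsen et al.), and the paper only establishes special cases of its spirit --- the $d=2$ analogue for bounded maximum degree (\cref{thm:strongcolouring}) and for graphs excluding a topological minor (\cref{thm:topologicalminor}), with the full conjecture known only for $k=2$ (\cref{thm:2coarse}, due to others). Your proposal, by its own admission, is not a proof either: the inductive ``peeling'' scheme is set up correctly, but its outcome (2) requires absorbing $B_G(V(P),d-1)$ --- the $(d-1)$-neighbourhood of a possibly very long path $P$ --- into a ball of radius $O(d)$ around a single additional centre, and as you say this is simply false in general. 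The alternative you sketch (a coarse, scale-$d$ analogue of a bramble or well-linked set that no $k-1$ balls of radius $cd$ can hit, through which one routes $d$-separated paths) is precisely the missing idea, and you do not construct it; so the argument has a genuine, acknowledged gap at its central step rather than a repairable slip.

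That said, your diagnosis of where the difficulty sits is accurate and consistent with what the paper actually does: the ``structured escape route'' you describe --- making the deleted region chargeable to boundedly many vertices --- is exactly what bounded degree (via strong edge colourings) and excluded topological minors (via the Grohe--Marx/Erde--Wei{\ss}auer decomposition plus Duchet--Meyniel) make possible in \cref{thm:strongcolouring} and \cref{thm:topologicalminor}, and even there only for $d=2$. Note also the reduction of McCarty and Seymour cited in the paper, which shows the case $d=3$ already implies the general conjecture; any genuine attack would therefore do well to target that case directly. As it stands, your text is a reasonable research plan and an honest assessment of the obstruction, but it does not prove the statement, nor could it be compared favourably or unfavourably with a paper proof, since none exists here.
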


We note that the conjecture of Albrechtsen et al.\ is in fact stronger, as it does not allow $c$ to depend on $k$. McCarty and Seymour have shown that it is sufficient to prove the conjecture for the case $d=3$ for the entire conjecture to hold, see \cite[Theorem 4]{albrechtsen_induced_2023}.

Both Georgakopoulos and Papasoglu \cite{georgakopoulos_graph_2023} and Albrechtsen et al.\ \cite{albrechtsen_induced_2023} have shown that \cref{conj:coarse} holds for $k=2$, the constant of $129$ below is from the latter authors.

\begin{thm}[\cite{georgakopoulos_graph_2023,albrechtsen_induced_2023}]\label{thm:2coarse}
   If $d\in \N$, $G$ is a graph and $X,Y\subseteq V(G)$, then there exists either
   \begin{enumerate}[label=\normalfont(\arabic*)]
       \item two disjoint $X\dash Y$-paths $P_1, P_2$ such that $\dist_G(P_1,P_2)\geq d$, or
       \item $z \in V(G)$ such that $B_G(z,129d)$ separates $X$ and $Y$. 
   \end{enumerate}
\end{thm}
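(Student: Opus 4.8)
The plan is to combine Menger's theorem with a ``reroute around a shortest path'' argument. First, I would apply \cref{thm:menger} with $k=2$ to $X$ and $Y$: either there are two disjoint $X\dash Y$-paths, or $X$ and $Y$ are separated by a set $Z$ with $|Z|\leq 1$; in the latter case, picking any $z\in Z$ (or any $z\in V(G)$ if $Z=\emptyset$), the ball $B_G(z,129d)\supseteq Z$ separates $X$ and $Y$ --- outcome (2). So assume two disjoint $X\dash Y$-paths exist; in particular there is an $X\dash Y$-path, and fix a shortest one, $P=p_0p_1\cdots p_D$, with $p_0\in X$, $p_D\in Y$, and $D=\dist_G(X,Y)$. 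We may also assume $X\not\subseteq B_G(p_0,129d)$ and $Y\not\subseteq B_G(p_D,129d)$, since otherwise $B_G(p_0,129d)$ (resp.\ $B_G(p_D,129d)$) meets every $X\dash Y$-path in an endpoint and hence separates $X$ and $Y$, giving outcome (2).

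The crucial dichotomy is whether the ``tube'' $B_G(P,d-1)=\bigcup_{i=0}^{D}B_G(p_i,d-1)$ separates $X$ and $Y$. If it does not, choose an $X\dash Y$-path $P_2$ in $G-B_G(P,d-1)$; then every vertex of $P_2$ is at distance at least $d$ from every vertex of $P$, so $P_1:=P$ and $P_2$ are disjoint with $\dist_G(P_1,P_2)\geq d$ --- outcome (1). So assume $B_G(P,d-1)$ separates $X$ and $Y$. If moreover $D\leq 128d$, then, $P$ being geodesic, $B_G(p_i,d-1)\subseteq B_G(p_0,i+d-1)\subseteq B_G(p_0,129d)$ for every $i\leq D$, whence $B_G(P,d-1)\subseteq B_G(p_0,129d)$, and so $B_G(p_0,129d)$ separates $X$ and $Y$ --- outcome (2).

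The remaining case --- $B_G(P,d-1)$ separates $X$ and $Y$ while $D>128d$ --- is the heart of the matter. Here I would exploit the hypothesis that no ball of radius $129d$ separates $X$ and $Y$, applied all along $P$: for each $i$, if $B_G(p_i,129d)$ does not separate $X$ and $Y$, fix an $X\dash Y$-path $Q_i$ all of whose vertices lie at distance more than $129d$ from $p_i$. Since $P$ is geodesic, every vertex of $Q_i$ then lies at distance more than $d$ from $p_j$ whenever $|i-j|\leq 128d$; yet $Q_i$ must meet the separating tube $B_G(P,d-1)$, so $Q_i$ can approach $P$ only near vertices $p_j$ with $|i-j|>128d$. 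Taking $i$ near $0$ yields an $X\dash Y$-path that stays $d$-far from a long initial segment of $P$ before first touching $P$ far along it, and taking $i$ near $D$ gives the symmetric statement at the $Y$-end. The goal is then to splice far portions of such detours with suitable sub-paths of $P$ to build two disjoint $X\dash Y$-paths that stay at distance $\geq d$ along their whole lengths --- outcome (1) --- and to show that any obstruction to this splicing exhibits a ball $B_G(z,129d)$, centred at the ``bottleneck'' of the tube, that separates $X$ and $Y$ --- outcome (2).

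I expect this last case to be the main obstacle, for two reasons. First, a long thin separating tube around a geodesic need not be contained in any single ball, so the separator cannot simply be ``localised''; an induction on $\dist_G(X,Y)$ (cutting the graph along a BFS sphere around $X$) runs into the same wall, since one must then re-extend the two far-apart paths back to $X$ and $Y$ without letting them come within distance $d$ of each other near the endpoints. Second, the splicing has to juggle several competing distance buffers --- the radius $d-1$ of the tube, the threshold $128d$, and the detour margins supplied by the no-ball hypothesis --- while keeping the two output paths $d$-apart everywhere rather than just at their endpoints. I would expect the constant $129$ to come out of optimising these buffers, with $128d$ appearing as the natural threshold (the largest diameter of a shortest path whose $(d-1)$-tube still fits inside a $129d$-ball); squeezing out the exact optimal constant, rather than merely ``some constant'', is presumably the delicate point on which the two existing proofs differ.
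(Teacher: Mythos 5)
This theorem is not proved in the paper at all --- it is quoted from \cite{georgakopoulos_graph_2023,albrechtsen_induced_2023} --- so there is no in-paper argument to compare against; your proposal must therefore stand on its own, and as it stands it has a genuine gap. The easy reductions are fine: Menger for $k=2$, taking a geodesic $X\dash Y$-path $P$, the dichotomy on whether $B_G(P,d-1)$ separates $X$ and $Y$, and the observation that when $P$ has length at most $128d$ the whole tube sits inside $B_G(p_0,129d)$. But the entire content of the theorem lies in the remaining case (the tube separates $X$ and $Y$ and $P$ is long), and there you only describe intentions: ``the goal is then to splice far portions of such detours with suitable sub-paths of $P$'' and ``any obstruction to this splicing exhibits a ball \dots''. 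No construction is given, and this is precisely where it is hard. Note in particular that once $B_G(P,d-1)$ separates $X$ and $Y$, \emph{every} $X\dash Y$-path meets the tube, so neither output path can be $P$ itself nor a single detour $Q_i$: both paths of outcome (1) must be assembled from pieces of several detours and segments of $P$, and one must certify that the two assembled paths stay at distance at least $d$ from each other along their entire lengths, not just where the individual pieces were chosen far from specific vertices $p_i$. Your sketch gives no mechanism for choosing compatible pieces for the two paths simultaneously, nor for extracting, when the splicing fails, a single centre $z$ whose $129d$-ball separates $X$ and $Y$ (the failure a priori only exhibits a long, thin separating region, which, as you yourself note, need not lie in one ball).

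Concretely, the missing step is a lemma of the following type, which is what the cited proofs actually establish by a careful analysis along $P$ (jumping between detours and controlling where consecutive detours can first re-enter the tube): either one can find two $X\dash Y$-paths each of which avoids $B_G(p_i,d-1)$ for all $i$ in a suitable long window, with the two windows and the connecting segments arranged so that the paths are $d$-separated everywhere, or else all detours are forced through a bounded-length window of $P$, in which case a ball of radius $O(d)$ around a vertex in that window meets every $X\dash Y$-path. Without a proof of such a statement (and the bookkeeping of the buffers $d-1$, $128d$, $129d$ that makes the constants close), the argument does not go through; the cases you do settle are the routine ones.
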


A natural  special case of \cref{conj:coarse} is the case when the considered graph $G$ has maximum degree bounded by a constant $\Delta$. Indeed, in this case one can upper-bound the size of the ball $B_G(Z,cd)$ in the statement of \cref{conj:coarse} by $|B_G(Z,cd)|\le |Z|\cdot \sum_{i=0}^{cd}{\Delta^i}<\Delta^{cd+1}k$. In particular, if the strong version of \cref{conj:coarse} proposed by Albrechtsen et al. holds (that is, with $c$ independent of $k$), then also the following must be true.

\begin{conj}\label{conj:maxdegree}
For every $d, \Delta \in \mathbb{N}$ there exists a constant $C=C(d,\Delta)>0$ such that the following holds. If $k \in \mathbb{N}$, $G$ is a graph with $\Delta(G)\le \Delta$ and $X, Y\subseteq V(G)$, then there exists either
 \begin{enumerate}[label=\normalfont(\arabic*)]
       \item $k$ $X\dash Y$-paths pairwise at distance at least $d$ in $G$, or
       \item a set of less than $Ck$ vertices in $G$ which separates $X$ and $Y$.
   \end{enumerate}
\end{conj}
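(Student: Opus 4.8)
The plan is to reduce \cref{conj:maxdegree} to a single ``covering--packing'' inequality. Write $\nu_d(G,X,Y)$ for the maximum size of a family of $X\dash Y$-paths that are pairwise at distance at least $d$ in $G$, and $\tau(G,X,Y)$ for the minimum size of a set of vertices separating $X$ and $Y$. It is enough to prove
\[
  \tau(G,X,Y)\ \le\ C_0(d,\Delta)\cdot \nu_d(G,X,Y)
\]
whenever $\Delta(G)\le \Delta$, for some $C_0(d,\Delta)\ge 1$: taking $C=C_0(d,\Delta)$, if $\nu_d\ge k$ we are in case (1), and otherwise $\nu_d\le k-1$ gives $\tau\le C_0(k-1)<Ck$, which is case (2).

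To prove the inequality, let $\mathcal P=\{P_1,\dots,P_m\}$ be a family of pairwise distance-$\ge d$ $X\dash Y$-paths with $m=\nu_d(G,X,Y)$ maximum, and let $B=B_G\!\big(V(P_1)\cup\dots\cup V(P_m),\,d-1\big)$. Then $B$ separates $X$ and $Y$: any $X\dash Y$-path $R$ avoiding $B$ would have every vertex at distance at least $d$ from every $P_i$, so $\dist_G(R,P_i)\ge d$ for all $i$, and $\mathcal P\cup\{R\}$ would contradict the maximality of $\mathcal P$. Thus it suffices to prove the following: \emph{whenever the union $B$ of the radius-$(d-1)$ balls around $m$ pairwise distance-$\ge d$ $X\dash Y$-paths separates $X$ and $Y$, there is a separator of size at most $C_0(d,\Delta)\,m$.} This is where the degree bound must be used and is, I expect, the main difficulty: $B$ is a union of only $m$ ``tubes'' $B_G(V(P_i),d-1)$, and although each tube can be arbitrarily long, by $\Delta(G)\le\Delta$ every tube has bounded ``width'' — at most $1+\Delta+\dots+\Delta^{d-1}$ vertices lie within distance $d-1$ of any single vertex. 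One wants to cut through all $m$ tubes simultaneously using only boundedly many vertices per tube; the difficulty is precisely that the tubes are long, so a would-be separator can be forced to meet a single tube in many far-apart places.

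To attack this I would pass to linear-programming duality, which isolates what has to be shown. Assume $r:=\lfloor(d-1)/2\rfloor\ge1$, i.e.\ $d\ge3$ (the case $d=2$ reduces to this by subdividing every edge of $G$ once). Consider the LP that maximises $\sum_Q x_Q$ over $x\ge0$ indexed by the $X\dash Y$-paths, subject to $\sum_{Q\,:\,\dist_G(v,Q)\le r}x_Q\le1$ for every vertex $v$; by LP duality its value $\lambda$ equals the minimum of $\sum_v y_v$ over $y\ge0$ with $\sum_{v\in B_G(Q,r)}y_v\ge1$ for every $X\dash Y$-path $Q$ (a fractional ``ball cover''), and one checks $\nu_d\le\lambda\le\tau$. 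It now suffices to bound two integrality gaps: (A) $\lambda\le C_1(d,\Delta)\,\nu_d$, i.e.\ a fractional distance-$\ge d$ packing of value $t$ can be rounded to $\Omega(t)$ genuinely distance-$\ge d$ paths; and (B) $\tau\le C_2(d,\Delta)\,\lambda$, i.e.\ a fractional ball cover of weight $w$ yields an honest separator of size $O(w)$. Step (B) is exactly the ``shrink $B$'' problem flagged above and is the part I expect to be hardest — the support of a fractional ball cover can be spread very thinly along the long tubes, so naive thresholding produces a far-too-large separator, and one must exploit that only $m$ tubes are present and that a separator needs only a bounded ``transversal capacity'' in each tube. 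For (A) the natural tool is a strong edge colouring of $G$, which uses $O(\Delta^2)$ colours: its purpose is precisely to turn ``bounded overlap near every vertex'' (which is what the packing constraints assert) into ``globally conflict-free'', so one would aim to partition the support of a basic optimal solution into $O_{d,\Delta}(1)$ distance-$\ge d$ sub-families and keep the heaviest — though making even this precise requires care, since the relevant conflict graph need not have bounded chromatic number a priori.

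On scope: the cleanest case is $d=2$, pairwise non-adjacent paths, where $B=N_G\big[V(P_1)\cup\dots\cup V(P_m)\big]$ and the tubes are literal closed neighbourhoods; this is the case highlighted in the abstract, so I would prove it first. The sharper bounds in the subcubic case should come from feeding in the tight estimate that subcubic graphs have strong chromatic index at most $10$, and the tight two-path statement from a finite case analysis — presumably the computer-assisted ingredient. Finally, extending to general $d$ and to graphs merely excluding a topological minor should amount to substituting the appropriate local-sparsity or structural input for ``bounded degree / strong edge colouring'', keeping the architecture — covering--packing inequality, LP reformulation, packing-side rounding, cover-side rounding — the same.
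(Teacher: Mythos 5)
The statement you are addressing is stated in the paper as a \emph{conjecture}: the paper does not prove \cref{conj:maxdegree} for general $d$, it only proves the first non-trivial case $d=2$, as a corollary of \cref{thm:strongcolouring} (equivalently \cref{thm:strongcolouring2,thm:strongcolouring3}). That proof takes a maximum family of pairwise disjoint $X\dash Y$-paths from Menger's theorem, strong-edge-colours only the edges joining distinct paths, and then inductively eliminates one colour class at a time by contracting it and reapplying Menger, losing a factor $2$ per colour; no linear programming appears. So there is no proof of the full statement in the paper to compare yours against, and the question is whether your argument establishes the conjecture on its own. It does not.

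The parts of your proposal that are actually argued are correct but elementary: the reduction to the covering--packing inequality $\tau\le C_0(d,\Delta)\,\nu_d$, the observation that $B_G(V(\mathcal P),d-1)$ around a maximal packing separates $X$ and $Y$, and the chain $\nu_d\le\lambda\le\tau$ from LP duality. The genuine gap is that the two integrality-gap statements (A) and (B), which you explicitly leave open, are not side issues but are together the entire content of the conjecture; the LP reformulation relocates the difficulty rather than reducing it. For (B) you offer no mechanism at all for converting a fractional ball cover (or the union of $m$ long bounded-width tubes) into a separator of size $O_{d,\Delta}(m)$ --- as you note, a separator may be forced to meet one tube in unboundedly many far-apart places, and nothing in the proposal addresses this; it is essentially the original problem restated. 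For (A), the suggestion to use a strong edge colouring to split the support of a basic optimal solution into boundedly many distance-$\ge d$ families is not substantiated, and you yourself concede the relevant conflict graph has no a priori bounded chromatic number. Even in the $d=2$ case, where strong edge colouring genuinely suffices, your sketch does not recover a proof: the paper's argument works with integral Menger paths and contraction of colour classes, not with rounding an LP solution, and that contraction step (which is what makes the colouring usable) is absent from your plan. As written, therefore, the proposal proves nothing beyond the easy inequalities, and the cases $d\ge 3$ of \cref{conj:maxdegree} remain open under your approach as well.
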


In the first main result of this paper, namely \cref{thm:strongcolouring} below, we prove \cref{conj:maxdegree} in the first non-trivial case when $d=2$, that is, when we look for a family of disjoint $X\dash Y$-paths that are pairwise non-adjacent\footnote{Since for every $X$-$Y$-path $P$ in a graph $G$ there exists an induced $X$-$Y$-path $P'$ such that $V(P')\subseteq V(P)$, one can see that the existence of a family of $k$ pairwise non-adjacent $X$-$Y$-paths is equivalent to the existence of a family $\mathcal{P}$ of $k$ different $X$-$Y$-paths, such that the union of the paths in $\mathcal{P}$ forms an induced subgraph of $G$. This explains the naming of the paper.}. For brevity, when saying that paths are pairwise non-adjacent, they will also be meant to be disjoint.

To state \cref{thm:strongcolouring} concisely we need a bit of terminology. Let $G$ be a graph. We say $M\subseteq E(G)$ is an induced matching if $\dist_G(e_1,e_2)\geq 2$ for every distinct $e_1,e_2\in M$. A \emph{strong edge colouring} of $G$ is a partition of the edges of $G$ into induced matchings. In other words, the edges are coloured such that no two edges of the same colour are adjacent. The \emph{strong chromatic index} of $G$, denoted by $\chi_s'(G)$, is the smallest number of matchings in a strong edge colouring of $G$. The strong chromatic index is well studied, and there are many known bounds depending on the maximum degree $\Delta(G)$ of $G$, as well as for more specific classes of graphs. In general, $\chi'_s(G)\leq 2\Delta(G)(\Delta(G)-1)+1$, which can be seen by counting the number of edges at distance at most 2 of any edge and colouring greedily. For large enough $\Delta$, currently the best bound, by Hurley, de Verclos and Kang \cite{hurley_improved_2022}, is $\chi_s'(G)\leq 1.772\Delta^2$ when $\Delta(G)\leq \Delta$.

\begin{thm}\label{thm:strongcolouring}
    If $k\in \N$, $G$ is a graph and $X,Y\subseteq V(G)$, then there exists either
    \begin{enumerate}[label=\normalfont(\arabic*)]
        \item $k$ pairwise non-adjacent $X\dash Y$-paths, or
        \item a set of less than $2^{\chi_s'(G)}k$ vertices which separates $X$ and $Y$.
    \end{enumerate}
\end{thm}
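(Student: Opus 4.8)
The plan is to reduce \cref{thm:strongcolouring} to ordinary Menger's theorem (\cref{thm:menger}) applied in a carefully chosen auxiliary graph. Fix a strong edge colouring of $G$ with colour classes $M_1,\dots,M_t$ where $t=\chi_s'(G)$. The key observation is that a family of $X\dash Y$-paths is pairwise non-adjacent if and only if it is pairwise disjoint \emph{and} no edge of $G$ has its two endpoints on two different paths of the family. I would like to encode the ``no connecting edge'' condition as a disjointness condition in a new graph, so that Menger's theorem can do the work.

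\medskip

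First I would construct an auxiliary graph $H$ as follows. For each vertex $v\in V(G)$, record the set $S(v)\subseteq\{1,\dots,t\}$ of colours appearing on edges incident with $v$; note $|S(v)|\le\deg_G(v)$, but more importantly each colour appears on at most one edge at $v$, so $S(v)$ together with, for each $i\in S(v)$, the other endpoint of the $i$-coloured edge at $v$, completely describes the neighbourhood. Now blow up each vertex $v$ into a clique (or rather, an appropriately connected gadget) on the $2^{|S(v)|}$ subsets of $S(v)$: a copy $v_A$ for each $A\subseteq S(v)$. The intended meaning of choosing the copy $v_A$ is ``the path through $v$ is allowed to be adjacent, at $v$, only to edges whose colour lies in $A$''. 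I would then put an edge in $H$ between $u_A$ and $v_B$ exactly when $uv\in E(G)$, the colour $i$ of $uv$ satisfies $i\in A$, and additionally — to forbid a \emph{different} path from also using the colour-$i$ edge at $u$ or at $v$ — we do \emph{not} need extra conditions here, because distinctness of paths already forbids reusing $uv$; the real constraint is that if a path uses vertex $v$ via copy $v_A$, then no \emph{other} path may contain an endpoint of an $i$-coloured edge at $v$ for $i\notin A$. This last point is what forces the gadget design: within the blow-up of $v$ we must ensure that at most one ``slot'' per colour is globally consumed. The cleanest way is to also split along colours: replace $v$ by vertices indexed by subsets, and route the required internal disjointness through Menger's min-cut side.

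\medskip

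Concretely, here is the construction I would commit to. Build $H$ with vertex set $\{v_A : v\in V(G),\ A\subseteq S(v)\}$ plus for technical convenience we may keep $X,Y$ lifted as $X_H=\{v_{S(v)} : v\in X\}$ and $Y_H$ similarly. Join $v_A$ to $v_{A'}$ whenever $A\cap A'=\emptyset$ is \emph{false}? No — instead, think of a path in $H$ as a genuine path in $G$ each of whose internal vertices $v$ carries a label $A\subseteq S(v)$ listing the two (or one, or zero) colours of the path-edges at $v$, and require consistency: consecutive path-edges of colours $i,j$ at $v$ force $A=\{i,j\}$. Then adjacency $u_A v_B$ holds iff $uv\in E(G)$ with colour $i$, $i\in A$, $i\in B$. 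A set of pairwise \emph{disjoint} $X_H\dash Y_H$-paths in $H$ projects to a set of disjoint $X\dash Y$-paths in $G$ with labels, and two such paths are non-adjacent in $G$ precisely when, for the unique colour $i$ of any $G$-edge between them with endpoints $u$ (on one path, labelled $A$) and $v$ (on the other, labelled $B$), we have $i\notin A$ or $i\notin B$ — i.e. the edge is not "claimed" by either path. To make \emph{disjointness in $H$} already imply non-adjacency in $G$, I add to $H$ the edges $u_A v_B$ for every $G$-edge $uv$ of colour $i$ and \emph{all} $A\ni i$, $B\ni i$, and crucially I also \emph{identify} (or interconnect into a single vertex-cut unit) all copies $v_A$ with $i\in A$, for each fixed $v$ and each colour $i\in S(v)$, so that using any one of them blocks all others at that colour. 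After this, two disjoint $H$-paths cannot both touch the colour-$i$ slot at $v$, hence cannot be joined by the $i$-edge at $v$; a short case check (the only $G$-edges between the two projected paths are at such shared-colour slots, impossible) shows disjoint $H$-paths $\Leftrightarrow$ non-adjacent $G$-paths, giving the bijection on the path side. For the separator side, apply \cref{thm:menger} in $H$: either $k$ disjoint $X_H\dash Y_H$-paths (done), or a separator $Z_H$ in $H$ of size $<k'$; project $Z_H$ down to $Z=\{v : v_A\in Z_H \text{ for some }A\}\subseteq V(G)$. One checks $Z$ separates $X$ and $Y$ in $G$ since every $X\dash Y$-path in $G$ lifts to an $X_H\dash Y_H$-path in $H$ meeting $Z_H$. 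The size bound: each $v\in V(G)$ contributes at most $2^{|S(v)|}\le 2^{\deg_G(v)}\le 2^{\Delta}$ copies — but we want the colour-free bound $2^{t}$, which holds because $|S(v)|\le t=\chi_s'(G)$ and in fact $|S(v)|\le\Delta$; taking the worst case gives $|V(H)|\le 2^{t}|V(G)|$ and each $G$-path meets each fibre at most once, so a min cut of size $ck$ in $H$ projects to $\le 2^{t}\cdot(\text{something})$... rather: we set $k'=k$ and observe that $k$ disjoint paths in $H$ give $k$ non-adjacent paths in $G$, while a separator of size $<k$ in $H$ projects to a separator of size $<k$ in $G$. The factor $2^{\chi_s'(G)}$ instead appears because to force the fibre-collapsing we must subdivide and the honest accounting yields a blow-up of $k$ by $2^{t}$ on the separator side only. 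I will sort the exact bookkeeping so the final separator bound reads $< 2^{\chi_s'(G)}k$.

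\medskip

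The main obstacle, and where I expect to spend the real effort, is getting the gadget right so that \emph{disjointness in $H$ is exactly equivalent to non-adjacency in $G$} in both directions simultaneously — the forward direction (lift a $G$-path-family to $H$) wants the fibre over $v$ to be large and flexible, while the backward direction (project an $H$-path-family down) wants the fibre to be rigid enough that two projected paths can never be joined by a $G$-edge. Reconciling these is exactly the tension resolved by the strong edge colouring: because each colour class is an induced matching, the colour $i$ at a vertex $v$ pins down a \emph{unique} partner edge, so ``the colour-$i$ slot at $v$'' is a well-defined single resource, and collapsing all copies sharing that slot does not over-constrain the lift. Once that equivalence is nailed down, everything else — applying \cref{thm:menger} in $H$, projecting the separator, and the counting $|{\rm fibre}(v)|\le 2^{|S(v)|}\le 2^{\chi_s'(G)}$ — is routine.
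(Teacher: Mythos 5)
There is a genuine gap, and it sits exactly at the step you defer (``I will sort the exact bookkeeping''): the equivalence you want your gadget to achieve is impossible. If there were an auxiliary graph $H$ with fibres over the vertices of $G$ such that (a) any $k$ pairwise disjoint $X_H\dash Y_H$-paths in $H$ project to $k$ pairwise non-adjacent $X\dash Y$-paths in $G$, and (b) every $X\dash Y$-path in $G$ lifts to an $X_H\dash Y_H$-path living in the fibres over its own vertices (which is what you use to project a separator), then your final step --- Menger in $H$ with $k'=k$, ``a separator of size $<k$ in $H$ projects to a separator of size $<k$ in $G$'' --- would prove the theorem with \emph{no} factor $2^{\chi_s'(G)}$ at all. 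That statement is false: in $K_n$ with $X,Y$ disjoint sets of size $n/2$ there are no two non-adjacent $X\dash Y$-paths, yet every $X$--$Y$ separator has size $n/2$. So no construction can satisfy (a) and (b) simultaneously, and the loss of the factor $2^{\chi_s'(G)}$ must be built into the argument somewhere you have not specified. Concretely, the configuration you must forbid is: $P_1$ passes through $u$, $P_2$ passes through $v$, $uv\in E(G)$ has colour $i$, and \emph{neither} path uses $uv$. Your $H$-edges $u_Av_B$ (which require $i\in A\cap B$) never see this configuration, and vertex-disjointness in $H$ only prevents two paths from sharing copies of the same $G$-vertex; so disjoint $H$-paths can perfectly well project to adjacent $G$-paths. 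The only way disjointness in $H$ can block this is to make lifts of paths through $u$ and lifts of paths through $v$ share an $H$-vertex, i.e.\ to identify copies of $u$ with copies of $v$ across the matching edge $uv$ --- contraction. (Your alternative fix, identifying all copies $v_A$ with $i\in A$ for every colour $i$, collapses the entire fibre except $v_\emptyset$ to a single vertex, since the label sets overlap across colours, so it does not do what you intend.) Once you contract across matching edges, separator preimages double per colour class, and you must also control the new phenomenon that edges of the \emph{original} paths can end up joining two of the new paths --- that is precisely where the induced-matching (strong) property is needed, and your one-shot framework never addresses it.

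For comparison, the paper's proof (\cref{thm:strongcolouring3}, which gives \cref{thm:strongcolouring2} and then \cref{thm:strongcolouring} via \cref{thm:menger}) embraces exactly this loss: it inducts on the number of colour classes, at each step deleting the other classes, contracting one induced matching $M$, applying \cref{thm:menger} in the contracted graph to keep half the paths ($2^mk\to 2^{m-1}k$, paying the factor $2$ because preimages of contracted vertices have size two), lifting the new paths back, and then using the induced-matching property of $M$ to show that the surviving colour classes still partition all edges between the lifted paths (in particular, that an old path edge cannot join two new paths, since both its ends would have to meet $M$). If you want to salvage your approach, you will find that making the fibre/identification gadget honest forces you into this same contract-one-colour-at-a-time scheme, with the $2^{\chi_s'(G)}$ emerging as the product of the per-colour factor $2$ --- at which point you have reproduced the paper's induction rather than a genuinely single application of Menger.
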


Given the bounds on $\chi_s'(G)$ mentioned above, we may obtain the $d=2$ case of \cref{conj:maxdegree} as a direct corollary of \cref{thm:strongcolouring}. 

By Menger's theorem (\cref{thm:menger}), \cref{thm:strongcolouring} is equivalent to the following result.

\begin{thm}\label{thm:strongcolouring2}
    If $k\in \N$, $G$ is a graph, $X,Y\subseteq V(G)$ and there exists at least $2^{\chi_s'(G)}k$ pairwise disjoint $X\dash Y$-paths, then there exist $k$ pairwise non-adjacent $X\dash Y$-paths in $G$.
\end{thm}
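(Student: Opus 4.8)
The plan is to strip off the colour classes of a strong edge colouring one at a time, each time halving the size of our family of paths while destroying all conflicts arising from the current colour and never creating a new conflict from a colour handled earlier. Fix a strong edge colouring $M_1,\dots,M_q$ of $G$ with $q=\chi_s'(G)$, so that each $M_i$ is an induced matching, meaning any two edges of $M_i$ are at distance at least $2$ in $G$. Call two disjoint $X\dash Y$-paths \emph{$i$-conflicting} if some edge of $M_i$ joins a vertex of one to a vertex of the other; two paths are non-adjacent exactly when they are not $i$-conflicting for any $i$. Starting from a linkage $\mathcal P_0$ of $2^qk$ pairwise disjoint $X\dash Y$-paths, I would build linkages $\mathcal P_0,\mathcal P_1,\dots,\mathcal P_q$ with $|\mathcal P_j|=2^{q-j}k$ such that $\mathcal P_j$ has no $i$-conflicting pair for any $i\le j$; then $\mathcal P_q$ is a family of $k$ pairwise non-adjacent $X\dash Y$-paths. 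Everything reduces to the following one-colour step: \emph{if $\mathcal Q$ is a linkage of $2m$ pairwise disjoint $X\dash Y$-paths with no $i$-conflicting pair for $i\in J$, and $\ell\notin J$, then there is a linkage $\mathcal Q'$ of at least $m$ pairwise disjoint $X\dash Y$-paths with no $i$-conflicting pair for $i\in J\cup\{\ell\}$.}

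To prove this step, consider the auxiliary graph $A$ on vertex set $\mathcal Q$ in which two paths are adjacent precisely when they are $\ell$-conflicting. If $A$ is bipartite, simply delete the smaller colour class: since adjacent vertices of $A$ lie on opposite sides, this kills every $\ell$-conflict, keeps at least $m$ paths, and — as we only removed paths — introduces no new conflict of any colour. This already handles the easy case where $A$ has maximum degree $1$. If $A$ is not bipartite, first choose $\mathcal Q$, among all linkages of the same size with no $J$-conflict, to minimise the number of $\ell$-conflicting pairs, breaking ties by the total number of edges so that every path of $\mathcal Q$ is induced. Now take an odd cycle $Q_1\dots Q_r$ of $A$ and \emph{rotate} it: for each $t$ (indices mod $r$), reroute $Q_t$ so that it follows $Q_t$ from its $X$-end to the endpoint of the $M_\ell$-edge leading towards $Q_{t+1}$, crosses that edge, and then follows $Q_{t+1}$ to its $Y$-end. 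Because the $r$ cycle edges lie in $M_\ell$ and are therefore pairwise at distance at least $2$, their two endpoints on any common path $Q_t$ are non-adjacent along $Q_t$, so there is a vertex strictly between them that we may discard; combined with $Q_t$ being induced, this ensures that the new family is again a linkage and that no edge formerly interior to some $Q_t$ becomes an edge between two new paths. Hence the new linkage has the same size, still has no $J$-conflict, but has strictly fewer $\ell$-conflicts (the $r$ cycle edges are now interior edges), contradicting the choice of $\mathcal Q$. So $A$ was bipartite after all, and we are done.

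The \textbf{main obstacle} is making the rotation legitimate. The naive rotation above yields a genuine linkage only when, on each $Q_t$, the port towards $Q_{t+1}$ precedes the port towards $Q_{t-1}$ in the $X$-to-$Y$ direction, and a priori these local orientations need not be consistent around the cycle; if not, the rerouted paths overlap. Overcoming this — by choosing the cycle carefully (for instance of minimum odd length), by reversing its traversal, or by a local repair at an inconsistent port that still creates no new conflict of a colour in $J\cup\{\ell\}$ — is the technical crux, and it is exactly here that both the induced-matching structure of a colour class and the freedom to keep all paths induced are essential. Granting the round lemma, applying it $q$ times converts the initial $2^qk$ disjoint $X\dash Y$-paths into $k$ pairwise non-adjacent $X\dash Y$-paths, which is the statement.
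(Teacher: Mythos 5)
Your global strategy (process the strong-colour classes one at a time, losing a factor $2$ per class, so that $2^{\chi_s'(G)}k$ disjoint paths yield $k$ paths with no conflict in any class, hence pairwise non-adjacent) is exactly the paper's inductive framework. The problem is the one-colour step, and you have correctly identified, but not closed, the hole in your own argument. Your plan is: take a conflict-minimal linkage, and if the $M_\ell$-conflict graph $A$ has an odd cycle, ``rotate'' along it to strictly reduce the number of $\ell$-conflicts, concluding that $A$ is bipartite so that one side of size at least $m$ can be kept. But the rotation only produces a linkage when, on every path of the cycle, the port towards the successor precedes the port from the predecessor, and these orientations need not be consistent around an odd cycle (already for a triangle one can arrange the three port orders so that neither traversal direction works). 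None of your suggested repairs (minimum odd cycle, reversing traversal, local surgery) is carried out, and the underlying claim -- that a conflict-minimal linkage has bipartite conflict graph -- is precisely what is left unproven. As it stands this is a genuine gap at the heart of the proof, not a routine detail.

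For comparison, the paper avoids any exchange argument at this point. In the inductive step it deletes the edges of all other colour classes, \emph{contracts} the chosen induced matching $M$, and applies Menger's theorem to the contracted graph $G''$: a separator of $X'$ and $Y'$ of size less than $2^{m-1}k$ would pull back (preimages have size at most $2$) to a separator of size less than $2^{m}k$ in $G'$, contradicting the survival of the original $2^{m}k$ disjoint paths there; hence $G''$ contains $2^{m-1}k$ disjoint paths, which are then lifted back to induced paths in $G'$. Disjointness in the contracted graph automatically supplies the globally consistent ``rerouting across $M$-edges'' that your rotation tries to build by hand, and the induced-matching property of $M$ is what guarantees that no edge of the original paths joins two distinct lifted paths (such an edge would have both ends incident to $M$-edges). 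If you want to salvage your write-up, replacing the rotation/bipartiteness claim by this contraction-plus-Menger argument for the one-colour step is the natural fix; alternatively you would need an actual proof that inconsistent port orders can always be repaired, which you do not currently have.
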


The idea behind the proof is as follows. Given a large number of disjoint $X\dash Y$-paths and a strong colouring of the edges in between the paths, we contract all edges of a colour class (say, green) and apply Menger's theorem to find many disjoint $X\dash Y$-paths in the contracted graph, which we can then lift back to the original graph. By this contraction, we will be guaranteed to not have any green edges between the paths, and the strong colouring will guarantee that there are no edges of the original paths that go between the new paths. After repeating this argument for every colour, we find a collection of pairwise non-adjacent paths.

We will prove \cref{thm:strongcolouring2} in \cref{sec:maxdegree}. In fact, given that we do not need to colour the edges of the original paths, we can obtain an improvement over the constant $2^{\chi_s'(G)}$, which is most significant when the maximum degree is small. In particular, in \cref{sec:subcubic}, we will show the following two results. For brevity, if $\mathcal P$ is a collection of subgraphs (typically, of paths) of a graph $G$, then write $V(\mathcal P)=\bigcup_{P\in \mathcal P} V(P)$ and $E(\mathcal P)=\bigcup_{P\in \mathcal P} E(P)$.

\begin{restatable}{thm}{kpathssubcubic}\label{thm:kpathssubcubic}
    If $G$ is a graph, $X,Y\subseteq V(G)$, and there exists a collection $\mathcal P$ of at least $16k$ disjoint $X\dash Y$-paths such that every vertex in $V(\mathcal P)$ is incident to at most one edge in $E(G)\setminus E(\mathcal P)$, then there exist at least $k$ pairwise non-adjacent $X\dash Y$-paths in $G$.
\end{restatable}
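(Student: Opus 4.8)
The plan is to work entirely inside the subgraph $H := G[V(\mathcal P)]$. Every vertex of $V(\mathcal P)$ lies on exactly one path of $\mathcal P$ (the paths are disjoint) and, by hypothesis, is incident to at most one edge of $E(G)\setminus E(\mathcal P)$; hence $H$ is the disjoint union of the paths in $\mathcal P$ together with a matching $M := E(H)\setminus E(\mathcal P)$, and in particular $\Delta(H)\le 3$. Since any edge of $G$ between two vertex sets contained in $V(\mathcal P)$ is already an edge of $H$, it suffices to exhibit $k$ pairwise non-adjacent $X\dash Y$-paths inside $H$; note that $H$ contains the $\ge 16k$ disjoint $X\dash Y$-paths of $\mathcal P$, and that every edge of $H$ is either an edge of a path of $\mathcal P$ or an edge of $M$.

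First I would properly colour $M$ with at most four colours $M_1,\dots,M_4$, each of which is an induced matching in $H$. Consider the conflict graph on the edge set $M$, in which two edges are adjacent when they lie at distance less than $2$ in $H$; a proper colouring of this graph has induced matchings as its colour classes. If $e=uv\in M$ and $f\in M$ conflicts with $e$, then $f$ is joined to $e$ by an edge of $H$; this edge cannot lie in $M$ (otherwise $u$ or $v$ would be incident to two edges of $M$), so it is an edge of a path of $\mathcal P$, and therefore $f$ contains a path-neighbour of $u$ or of $v$. As $u$ and $v$ have at most two path-neighbours each, and each vertex of $V(\mathcal P)$ lies on at most one edge of $M$, the conflict graph has maximum degree at most $4$. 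It also has no component isomorphic to $K_5$: five edges of $M$ pairwise at distance $1$ in $H$ would be joined in pairs by at least ten distinct path-edges, and since each of the ten endpoints of these five edges is incident to at most two path-edges, these ten edges would form a $2$-regular subgraph of the linear forest $\bigcup\mathcal P$, which is impossible. Hence, by Brooks' theorem, the conflict graph is $4$-colourable, as required.

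Next I would process the colours one at a time, maintaining the invariant that after step $i$ there is a collection $\mathcal Q_i$ of at least $16k/2^i$ pairwise disjoint $X\dash Y$-paths in $H$ such that $\bigcup\mathcal Q_i$ is an induced subgraph of $H-(M_{i+1}\cup\dots\cup M_4)$. For $i=0$ take $\mathcal Q_0:=\mathcal P$, which works since $\bigcup\mathcal P = H-M$. Suppose $\mathcal Q_{i-1}$ is given and set $\widehat H := (H-(M_{i+1}\cup\dots\cup M_4))[V(\mathcal Q_{i-1})]$; by the invariant, $\widehat H$ is precisely the disjoint union of the paths of $\mathcal Q_{i-1}$ together with $N$, the set of edges of $M_i$ with both endpoints in $V(\mathcal Q_{i-1})$, and $N$ is an induced matching in $\widehat H$ (being a subset of the induced matching $M_i$ of $H\supseteq\widehat H$). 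I would then contract $N$: any separator of $X$ and $Y$ in $\widehat H/N$ lifts to a separator in $\widehat H$ of at most twice the size (replace each contracted vertex by its two preimages), while $\widehat H$ itself has at least $|\mathcal Q_{i-1}|$ disjoint $X\dash Y$-paths, so by Menger's theorem $\widehat H/N$ has at least $|\mathcal Q_{i-1}|/2$ of them. Lifting a suitable such family back to $\widehat H$ produces $\mathcal Q_i$. Contracting $N$ automatically removes every edge of $M_i$ from between distinct members of $\mathcal Q_i$; the remaining work in this step is to choose the disjoint paths in $\widehat H/N$ so that, after lifting, no edge of a path of $\mathcal P$ and no edge of $M_1\cup\dots\cup M_{i-1}$ reappears between two members of $\mathcal Q_i$ — equivalently, so that $\bigcup\mathcal Q_i$ is an induced subgraph of $\widehat H/N$, and hence of $H-(M_{i+1}\cup\dots\cup M_4)$. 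This is exactly the point where one must use that $N$ is an induced matching, together with a careful (e.g.\ vertex-minimal) choice of the disjoint paths.

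After step $i=4$ the invariant gives a family $\mathcal Q_4$ of at least $16k/2^4 = k$ disjoint $X\dash Y$-paths whose union is an induced subgraph of $H$; these are pairwise non-adjacent in $H$, hence in $G$, which proves the theorem. I expect the main obstacle to be the last part of the inductive step: ensuring that the re-routing obtained from contracting an induced matching and re-applying Menger's theorem can be performed without creating new adjacencies between the resulting paths through path-edges or previously handled colour classes. By comparison, the existence of the $4$-colouring and the ``factor $2$ per colour'' separator bookkeeping should be routine.
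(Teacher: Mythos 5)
Your overall plan is the same as the paper's: restrict to $G[V(\mathcal P)]$, partition the non-path edges into four induced matchings, and then halve the number of paths four times by contracting one matching at a time and reapplying Menger's theorem (the paper packages this second part as a separate statement, \cref{thm:strongcolouring3}, proved in \cref{sec:maxdegree}). Your four-colouring argument is a legitimate variant: the paper shows the conflict graph is $3$-degenerate via a ``closest-to-a-path-end'' argument, while you bound its maximum degree by $4$, exclude $K_5$ by a counting argument inside the linear forest, and invoke Brooks' theorem; both routes are correct and of comparable difficulty.

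However, there is a genuine gap at the heart of the argument. The step you describe as ``the remaining work'' --- choosing the disjoint paths in $\widehat H/N$ so that after lifting no edge of $E(\mathcal Q_{i-1})$ (or of an earlier colour class) runs between two of the new paths, i.e.\ so that $\bigcup\mathcal Q_i$ is again induced in $H$ minus the future colours --- is precisely the content of the paper's \cref{thm:strongcolouring3}, and you do not prove it; you only assert that it should follow from $N$ being an induced matching together with a ``careful (e.g.\ vertex-minimal) choice'' of the paths, and you yourself flag it as the main obstacle. The paper's proof of this step is not routine: one must take the path family of minimum total length (so that each path is induced and meets $X\cup Y$ only at its endpoints), take induced lifts of the Menger paths in the graph where all other colour classes have been deleted, and then argue that a leftover edge $uv$ between two lifted paths is impossible --- if $uv\in N$ the two contracted images would intersect, and if $uv$ is an old path edge then each of $u$ and $v$ must be incident to an edge of $N$ (because at an interior vertex a lifted path cannot use two old path edges in addition to $uv$), contradicting that $N$ is an induced matching, with a separate analysis when $u$ or $v$ is a path endpoint in $X\cup Y$. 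Without this argument (or an equivalent one), the invariant you state is unverified and the proof is incomplete; everything else in your proposal (the degree bound, the factor-$2$ separator bookkeeping, the final count $16k/2^4=k$) is fine.
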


This is a large improvement over the constant $2^{10}$ which would be obtained from \cref{thm:strongcolouring2} with the bound of $\chi_s'(G)\leq 10$ when $\Delta(G)\leq 3$ proved independently by Andersen \cite{andersen_strong_1992} and by Horák, Qing and Trotter \cite{horak_induced_1993}.

For the case $k=2$, we will show in a computer-assisted proof the following tight result.

\begin{restatable}{thm}{twopathssubcubic}\label{thm:twopathssubcubic}
    If $G$ is a graph, $X,Y\subseteq V(G)$, and there exists a collection $\mathcal P$ of five disjoint $X\dash Y$-paths such that every vertex in $V(\mathcal P)$ is incident to at most one edge in $E(G)\setminus E(\mathcal P)$, then there exist  two pairwise non-adjacent $X\dash Y$-paths in $G$.

    Furthermore, the statement does not necessarily hold if either
    \begin{enumerate}[label=\normalfont(\alph*)]
        \item $\mathcal P$ contains four paths instead of five, or
        \item we replace the condition that every vertex in $V(\mathcal P)$ is incident to at most one edge in $E(G)\setminus E(\mathcal P)$ by the condition that the maximum degree of $G$ is three.
    \end{enumerate}
\end{restatable}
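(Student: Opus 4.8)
The plan is to follow the contraction-and-lift strategy behind \cref{thm:strongcolouring2}, but to execute it by hand in a way that is efficient enough to work with only five paths, and then to supplement the positive direction with two explicit constructions for the sharpness claims (a)~and~(b). For the positive direction, let $\mathcal P=\{P_1,\dots,P_5\}$ be the five disjoint $X\dash Y$-paths, and let $H$ be the graph $G[V(\mathcal P)]$ together with all edges of $E(\mathcal P)$; by hypothesis every vertex of $H$ meets at most one edge of $F:=E(H)\setminus E(\mathcal P)$, so $F$ is a (partial) matching of $H$. The key observation is that $F$ behaves very much like an induced matching \emph{relative to the paths}: two edges of $F$ can be adjacent only through a vertex of some $P_i$, and even that is ruled out because each vertex is incident to at most one edge of $F$. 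So $F$ itself is an induced matching, and in particular one could \emph{in principle} colour $F$ with a single colour; the subtlety is that contracting all of $F$ at once may merge paths in ways that interact badly. The refinement I would use is to colour the edges of $F$ so that within each colour class the edges, once contracted, still allow Menger's theorem to be applied; concretely, I would analyse the auxiliary multigraph $R$ on vertex set $\{1,\dots,5\}$ where each edge of $F$ joining $P_i$ to $P_j$ contributes an edge $ij$ (loops, when an edge of $F$ joins two vertices of the same $P_i$, can be discarded since such edges are irrelevant to non-adjacency between distinct paths). Since $R$ has $5$ vertices, $\chi'(R)\le 5$ by Vizing's theorem (in fact one can do better by a case analysis on how many edges of $F$ there are), and each colour class of $R$ is a matching on $\{1,\dots,5\}$, hence involves at most two edges; contracting the corresponding at most two edges of $F$ merges the five paths into at least $5-2=3$ disjoint $X\dash Y$-paths in the contracted graph, and Menger's theorem lets us extract, and then lift, those paths. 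Iterating over the (at most five, really at most three after a more careful count) colour classes, and tracking how many paths survive each round, should leave us with at least two $X\dash Y$-paths with no edge of $G$ between them.

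The place where a naive count fails — and where the computer assistance enters — is in bounding exactly how many paths survive each contraction round when the edges of $F$ are distributed adversarially among the five paths, because the contractions from different rounds compose and a single edge of $F$ can change which pair of paths a later edge connects. Rather than prove a clean worst-case bound by hand, the natural move is to reduce the whole problem to a finite check: up to isomorphism there are only finitely many ways the matching $F$ can sit on top of five paths (after contracting each $P_i$ to a single vertex, the relevant data is just the multigraph $R$ on $5$ vertices together with, for each edge of $R$, which of the two $P_i$-internal positions its endpoints occupy — but for the purpose of finding two \emph{non-adjacent} $X\dash Y$-paths, contracting each $P_i$ is lossless, so $R$ alone suffices). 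One then checks, over all multigraphs $R$ on $5$ vertices (with an appropriate edge-multiplicity cap coming from $\Delta(G)\le$ something, or simply over all $R$ arising from the hypothesis that $F$ is a matching), that one can always pick two of the five contracted path-vertices and route two vertex-disjoint paths between them in the contracted graph avoiding all remaining $R$-edges; these lift to two non-adjacent $X\dash Y$-paths. The hard part, and the genuine content of the computer-assisted argument, is setting up this finite reduction correctly — in particular arguing that contracting each $P_i$ to a point loses no information for the existence of two non-adjacent $X\dash Y$-paths, and that the five-vertex case is genuinely finite — after which the verification is a routine (if tedious) enumeration.

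For sharpness, I would exhibit two small graphs. For (a), a graph built from four paths arranged so that every pair of the four is joined by an edge of $F$ in a ``cyclic'' pattern (think of $F$ realising a $4$-cycle or a perfect-matching-plus-more on $\{1,2,3,4\}$), chosen so that no two of the four $X\dash Y$-paths, and no two paths obtainable by rerouting through $F$, are non-adjacent — a direct check that there are at most, say, three disjoint $X\dash Y$-paths in the induced-subgraph sense, and that one cannot get two non-adjacent ones. For (b), the point is that ``maximum degree $3$'' is strictly weaker than ``each vertex of $V(\mathcal P)$ meets at most one edge outside $E(\mathcal P)$'': one can have a degree-$3$ graph with five disjoint $X\dash Y$-paths where some vertices of the paths are incident to two path-external edges (because the path itself uses only one edge at an internal vertex of degree... wait, internal path vertices have path-degree $2$), so the external edges can cluster. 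Concretely I would take five paths and add external edges so that the ``conflict graph'' on the five paths is complete (each pair adjacent), which a careful degree-$3$ construction allows because the external edges can be attached at distinct internal vertices of each path; then verify by hand (this graph is small) that no two of the resulting $X\dash Y$-paths are non-adjacent and that rerouting does not help. Both constructions are small enough to double-check by inspection, so the only real obstacle in the whole proof remains the finite-case analysis for the positive statement.
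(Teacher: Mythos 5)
Your positive direction has two fatal gaps. First, the hypothesis only makes $F:=E(G[V(\mathcal P)])\setminus E(\mathcal P)$ a \emph{matching}, not an \emph{induced} matching: two edges of $F$ may have endpoints that are consecutive vertices of some $P_i$, and such edges are at distance $1$ in $G$. This is exactly why \cref{thm:kpathssubcubic} has to partition $F$ into four induced matchings (and the Moser-spindle configuration of \cref{fig:4coloursexample} shows four is needed), so your step ``$F$ itself is an induced matching, contract it in one (or a few) rounds'' already fails. Second, and more importantly, your proposed finite reduction is not valid: contracting each $P_i$ to a single vertex is \emph{not} lossless for the existence of two non-adjacent $X\dash Y$-paths. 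Whether one can reroute two non-adjacent paths depends crucially on the \emph{order} in which the $F$-edges attach along each path, not merely on which pairs of paths they join; two instances with the same conflict multigraph $R$ on five vertices but different attachment orders can differ in the answer (the four-path counterexample of \cref{fig:4pathcounterexample} versus reorderings of its cross edges illustrates this), so no enumeration of multigraphs on five vertices can decide the problem. Your fallback iteration (Vizing on $R$, contract a colour class, apply Menger, lift, repeat) also does not close the gap: as in \cref{thm:strongcolouring3}, each round may roughly halve the number of surviving paths, so five initial paths are far too few for even three rounds — which you concede, but the repair you offer is the flawed reduction. The paper keeps precisely the positional data you discard: it normalizes $(G,X,Y,\mathcal P)$ to a ``path system'', proves every path system is generated from a trivial one by two local extension operations at the $B$-side ($\oplus$ with a pair or a cyclic permutation), and then runs a computer-checked reachability analysis over ``states'' recording how two connected, non-adjacent vertex sets can meet the moving frontier; the finiteness lives in the state space, not in a contracted quotient of $G$.

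The sharpness claims are also not established. For (a) you do not exhibit a graph: the actual counterexample (\cref{fig:4pathcounterexample}) is a specific configuration of sixteen cross edges on four long paths, found by adapting the search and verified by computer; a generic ``cyclic pattern'' on four paths typically \emph{does} admit two non-adjacent $X\dash Y$-paths, and ruling out all rerouted paths is the whole difficulty. For (b), the construction you describe — external edges attached at distinct \emph{internal} vertices of each path — satisfies the original hypothesis (at most one external edge per vertex of $V(\mathcal P)$) and therefore cannot be a counterexample, by the first part of the theorem itself. The point of (b) is that maximum degree three permits \emph{two} external edges at the path endpoints, i.e.\ at vertices of $X\cup Y$; the paper's example is the matching join of a five-cycle on $X$ with the complement of a five-cycle on $Y$ (\cref{fig:5pathcounterexample}), where the five paths are the matching edges. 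Finally, note that making the conflict graph on the five paths complete is necessary but not sufficient — one must still exclude two non-adjacent rerouted paths, which your sketch only asserts.
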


Both the proof methods of Theorems \ref{thm:kpathssubcubic} and \ref{thm:twopathssubcubic} and (b) in the latter indicate that the maximum degree of $G-E(\mathcal{P})$ is a more natural parameter to bound than the maximum degree of $G$.

A direct consequence of (1) in this result is that the $16k$ in \cref{thm:kpathssubcubic} cannot be improved below $4k-3$ (consider a disjoint union of $k-1$ copies of a graph containing $4$ disjoint $X\dash Y$-paths but no two non-adjacent $X\dash Y$-paths).

We say a graph $H$ is a \emph{topological minor} of a graph $G$ if $G$ contains a subdivision\footnote{As usual, by a \emph{subdivision} of $H$ we here mean any graph that is isomorphic to a graph that can be obtained from $H$ by replacing its edges by paths of positive length.} of $H$ as a subgraph. In \cref{sec:subdivision}, using the structure theorem for graphs excluding a topological $K_r$-minor first proved by Grohe and Marx \cite{grohe_structure_2015}, as well as Erde and Wei{\ss}auer \cite{erde_short_2019}, we then generalize our induced Menger's theorem to the class of graphs excluding the complete graph $K_r$ as a topological minor.

\begin{thm}\label{thm:topologicalminor}
For every $r\in \N$, there exists $c=c_{\ref{thm:topologicalminor}}(r)>0$ such that the following holds. 

If $G$ is a graph not containing $K_r$ as a topological minor and $X, Y \subseteq V(G)$, then there exists either
\begin{enumerate}[label=\normalfont(\arabic*)]
        \item $k$ pairwise non-adjacent $X\dash Y$-paths, or
        \item a set of less than $ck$ vertices which separates $X$ and $Y$.
    \end{enumerate}
\end{thm}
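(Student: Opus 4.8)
The plan is to reduce \cref{thm:topologicalminor} to \cref{thm:strongcolouring} (equivalently \cref{thm:strongcolouring2}) by exploiting the structure theorem for graphs excluding $K_r$ as a topological minor. Recall that, in the bounded-maximum-degree case, \cref{thm:strongcolouring} already gives a linear separator because $\chi_s'(G)$ is bounded in terms of $\Delta(G)$. So the main content here is to handle the unbounded-degree behaviour allowed by graphs excluding a topological minor. The key point is that, by the Grohe--Marx / Erde--Wei{\ss}auer structure theorem, a graph $G$ with no topological $K_r$-minor admits a tree-decomposition of bounded adhesion in which each torso is, after deleting a bounded number of ``apex'' vertices, of bounded maximum degree. (More precisely: there is a constant $a=a(r)$ such that $G$ has a tree-decomposition of adhesion at most $a$ in which every torso either has at most $a$ vertices of degree exceeding some bound $b=b(r)$, or has at most $a$ vertices total — the ``nearly bounded-degree'' decomposition.)

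The first step is to set up the standard Menger duality: by \cref{thm:menger} it suffices to show that if $G$ has no $K_r$ topological minor, $X,Y\subseteq V(G)$, and there are at least $ck$ pairwise disjoint $X\dash Y$-paths, then there are $k$ pairwise non-adjacent such paths; here $c=c(r)$ is to be determined. We fix a maximum collection $\mathcal{Q}$ of $\lceil ck\rceil$ disjoint $X\dash Y$-paths and work inside (a slightly enlarged neighbourhood of) $V(\mathcal{Q})$. The second step is to remove the high-degree apex vertices: take the tree-decomposition promised by the structure theorem, and let $A$ be the union, over all torsos, of the bounded-size sets of high-degree / exceptional vertices. In general $A$ need not be bounded globally, but along any single $X\dash Y$-path only a bounded number of bags are "used", and more importantly one argues that the vertices of $A$ lying on the paths in $\mathcal{Q}$ can be ``charged'' so that deleting them destroys only a bounded fraction of the paths. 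Concretely: if many of the $ck$ paths pass through $A$, then because adhesion is bounded one finds a single bag whose deletion separates a linear number of them, contradicting maximality of the path system unless $c$ was chosen large enough — this is the place where a clean counting/uncrossing argument is needed, and it is the step I expect to be the main obstacle, since coordinating the tree structure, the bounded adhesion, and the chosen path system requires care.

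Once the apex set $A$ is dealt with, we are left with $G-A$ (intersected with the relevant region), which has a tree-decomposition of bounded adhesion whose torsos have bounded maximum degree. The plan is then to bound $\chi_s'$ of the relevant subgraph in terms of $r$ alone: a graph with a tree-decomposition of bounded adhesion and bounded-degree torsos has bounded degeneracy and indeed bounded ``local structure'', and in particular one can show $\chi_s'(G-A)\le f(r)$ for some function $f$ — this follows because every edge has boundedly many edges within distance $2$, as the bounded adhesion limits how edges in different bags can interact. (Alternatively, one can directly build the strong edge colouring torso by torso, reusing colours across the tree in the same way one colours bounded-treewidth graphs greedily along the decomposition.) Applying \cref{thm:strongcolouring2} to $G-A$ with the at-least-$ck - (\text{paths through }A)$ surviving disjoint paths then yields $k$ pairwise non-adjacent $X\dash Y$-paths in $G-A\subseteq G$, provided $c=c(r)$ was chosen as, say, $2^{f(r)}$ times a constant absorbing the loss from $A$.

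Finally, I would double-check the interface between the two reductions: the paths returned by \cref{thm:strongcolouring2} are non-adjacent \emph{in $G-A$}, but since their vertices avoid $A$ and the separating set we ultimately want lives in $G$, non-adjacency in $G-A$ between vertex sets disjoint from $A$ is the same as non-adjacency in $G$, so this is fine. The only genuinely delicate quantitative point, as noted, is controlling how many of the $ck$ paths are lost to $A$; everything else is an exercise in combining the structure theorem with the already-proved \cref{thm:strongcolouring2}. I would present the argument with $c=c_{\ref{thm:topologicalminor}}(r)$ defined at the end once all the bounds $a(r)$, $b(r)$, $f(r)$ are in hand, and remark that the resulting bound is far from optimal but suffices to establish the qualitative statement (a linear-in-$k$ separator).
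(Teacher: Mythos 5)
Your reduction breaks down at the point where you invoke the structure theorem. The Grohe--Marx/Erde--Wei{\ss}auer decomposition does \emph{not} say that every torso is ``nearly of bounded degree'': its second outcome is that a torso may be $K_{h}$-minor-free for $h=2r^2$ (the paper uses exactly this form in \cref{thm:grohemarx}), and such torsos can be arbitrarily large and of arbitrarily large maximum degree (already a star is planar). Consequently your plan of deleting a bounded apex set per torso and then bounding $\chi_s'$ of what remains cannot work: no bound on the strong chromatic index in terms of $r$ is available in the minor-free case, so \cref{thm:strongcolouring2} alone cannot finish the argument. This missing case is precisely why the paper needs \cref{lemma:minorfree} (contract each path of the system to a vertex and apply Duchet--Meyniel to get a linear-size independent set of paths); without some substitute for that lemma your proof does not cover graphs such as large planar torsos. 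A second, independent error is the claim that bounded adhesion plus bounded-degree torsos forces boundedly many edges within distance two of any edge: the star $K_{1,n}$ has a tree-decomposition of adhesion $1$ whose torsos are single edges, yet $\chi_s'(K_{1,n})=n$, so the intended colouring step fails even in the ``good'' case unless you work inside a single bag rather than globally.

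Finally, the step you yourself flag as the main obstacle --- charging the paths that meet the exceptional vertices spread over many bags --- is not a technicality one can wave through; the paper avoids it entirely by a different mechanism: it takes a minimum counterexample, proves that every separation of small order has one side meeting \emph{all} paths (\cref{claim:separation}), orients the decomposition tree accordingly to find a single bag $V_{t^\ast}$ hit by every path (\cref{claim:bagselect}), and then transfers the whole problem into an auxiliary graph $H$ on that one bag (with virtual edges recording outside connections), where either the few-high-degree-vertices case (via \cref{thm:strongcolouring2}) or the minor-free case (via \cref{lemma:minorfree}) applies, before lifting the non-adjacent paths back to $G$. Your proposal has no analogue of this localisation or of the lifting argument, and as written the quantitative bookkeeping you defer is exactly where a global approach gets stuck.
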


\section{Graphs with bounded maximum degree}\label{sec:maxdegree}

\cref{thm:strongcolouring2} follows directly from the following result, by taking $\mathcal M$ as the colour classes of edges in $E(G[V(\mathcal P)])\setminus E(\mathcal P)$ (in other words, the edges not in $\mathcal P$ but with both ends in vertices in $\mathcal P$) in a strong edge colouring of $G$.

\begin{thm}\label{thm:strongcolouring3}
    If $m,k\in \N$, $G$ is a graph, $X,Y\subseteq V(G)$, $\mathcal P$ is a collection of $2^mk$ pairwise disjoint $X\dash Y$-paths and $\mathcal M$ is a partition of $E\left(G[V(\mathcal P)]\right)\setminus E(\mathcal P)$ into $m$ induced matchings of $G$, then there exist $k$ pairwise non-adjacent $X\dash Y$-paths in $G$.
\end{thm}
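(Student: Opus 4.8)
The plan is to prove the statement by induction on $m$. For $m=0$ the hypothesis gives $E(G[V(\mathcal P)])=E(\mathcal P)$, so no edge of $G$ joins two distinct paths of $\mathcal P$, and the $2^0k=k$ paths of $\mathcal P$ are already pairwise non-adjacent. For the inductive step I would first normalise the instance. Replacing $G$ by $G[V(\mathcal P)]$ and then $X,Y$ by the sets of endpoints of the paths of $\mathcal P$ lying in $X$, respectively $Y$, we may assume $V(G)=V(\mathcal P)$ and that $X$, $Y$ are exactly these endpoint sets; then, attaching a fresh pendant vertex to every vertex of $X\cup Y$ (two pendants, on the $X$-side and the $Y$-side, to each vertex of $X\cap Y$) and redefining $X$, $Y$ to be the resulting pendant sets, we may further assume that $X\cap Y=\emptyset$ and that every vertex of $X\cup Y$ has degree $1$ in $G$. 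All of this preserves the hypotheses: $E(G)\setminus E(\mathcal P)=M_1\cup\dots\cup M_m$ is unchanged, and each $M_i$ stays an induced matching, since attaching pendants neither changes the distances between old edges nor creates new edges between them.

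Now put $M:=M_m$ and contract $M$ in $G$, with contraction map $\pi$; note that $\pi$ fixes $X\cup Y$ pointwise, because the unique edge at a degree-$1$ vertex of $X\cup Y$ lies in $E(\mathcal P)$ and hence not in $M$. Let $H$ be the ``skeleton'' graph on $\pi(V(G))$ with edge set $\{\pi(e):e\in E(\mathcal P)\}$, that is, the graph obtained from the disjoint union of the paths of $\mathcal P$ by identifying the two endpoints of each edge of $M$. Then $H$ contains $2^{m-1}k$ pairwise disjoint $X$-$Y$-paths: if $Z$ separates $X$ and $Y$ in $H$, then $\pi^{-1}(Z)$ has size at most $2|Z|$ and separates $X$ and $Y$ in the disjoint union $(V(G),E(\mathcal P))$ of the paths of $\mathcal P$ (the $\pi$-image of any $X$-$Y$-path in the latter is an $X$-$Y$-walk in $H$, which meets $Z$); since the $2^mk$ paths of $\mathcal P$ are disjoint, $|\pi^{-1}(Z)|\ge 2^mk$, whence $|Z|\ge 2^{m-1}k$, and Menger's theorem applies. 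Choose such a family $\mathcal P'$ of exactly $2^{m-1}k$ disjoint paths in $H$ and lift it to a family $\mathcal Q$ of $2^{m-1}k$ disjoint $X$-$Y$-paths in $G$, expanding each contracted vertex and traversing an edge of $M$ precisely when a path enters and leaves on opposite ends of it. By construction every path of $\mathcal Q$ is a concatenation of subpaths of paths of $\mathcal P$ glued along edges of $M$; in particular $V(\mathcal Q)\subseteq V(\mathcal P)$ and $E(\mathcal Q)\subseteq E(\mathcal P)\cup M$.

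The crux is to show that no edge of $E(\mathcal P)\cup M$ joins two distinct paths of $\mathcal Q$, i.e.\ that $E(G[V(\mathcal Q)])\setminus E(\mathcal Q)\subseteq M_1\cup\dots\cup M_{m-1}$. If $uv\in M$ with $u,v\in V(\mathcal Q)$, then $\pi(u)=\pi(v)$ is a single vertex lying on a unique path of $\mathcal P'$, so $u$ and $v$ lie on a common path $Q\in\mathcal Q$; as $Q$ meets each of $u,v$ only when it passes through $\pi(u)$, it must traverse $uv$, so $uv\in E(\mathcal Q)$. Now suppose $ab\in E(\mathcal P)$ with $a\in V(Q_i)$ and $b\in V(Q_j)$ for distinct $Q_i,Q_j\in\mathcal Q$, and let $P\in\mathcal P$ be the path containing the edge $ab$. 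Since the paths of $\mathcal P$ are vertex-disjoint, the maximal subpath of $P$ along which $Q_i$ runs through $a$ cannot contain $b$ --- otherwise it would contain the edge $ab$ and force $b\in V(Q_i)$ --- so $a$ is an end of that subpath; hence at $a$ the path $Q_i$ either continues along an edge of $M$ or ends. In the latter case $a\in X\cup Y$, so $a$ is a degree-$1$ vertex whose only edge is $ab$, which again forces $b\in V(Q_i)$, a contradiction. Thus $Q_i$ uses an edge $aa^{\ast}\in M$ at $a$ and, by symmetry, $Q_j$ uses an edge $bb^{\ast}\in M$ at $b$; but then $aa^{\ast}$ and $bb^{\ast}$ are distinct edges of $M$ joined by the edge $ab$, contradicting that $M$ is an induced matching. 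This establishes the containment, so we may apply the induction hypothesis to $G$, $X$, $Y$, $\mathcal Q$ and the induced matchings $M_i\cap\big(E(G[V(\mathcal Q)])\setminus E(\mathcal Q)\big)$ for $i<m$; this yields $k$ pairwise non-adjacent $X$-$Y$-paths, and deleting the pendant vertices recovers the desired $k$ paths in the original graph.

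I expect the main obstacle to be exactly the last claim --- that no edge of $E(\mathcal P)$ runs between two of the lifted paths. Securing it is what forces the two non-routine design decisions above: lifting only inside the skeleton $H$ (so that each path of $\mathcal Q$ decomposes into $\mathcal P$-subpaths joined by $M$-edges) and the pendant reduction (so that the ``$a$ is an endpoint of $Q_i$'' branch collapses because $X\cup Y$ consists of degree-$1$ vertices). The other ingredients --- the base case, the Menger-type estimate after contracting $M$, and bookkeeping which matchings survive --- are routine.
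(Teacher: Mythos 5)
Your overall strategy is the same as the paper's: induct on $m$, contract one induced matching $M$, apply Menger in the contracted graph, lift the resulting disjoint paths back, and use the induced-matching property of $M$ to show that all leftover edges lie in the remaining $m-1$ matchings (your pendant normalisation plays the role of the paper's choice of $\mathcal P$ with minimum total length, and your skeleton $H$ is essentially the paper's contracted graph $G''$). There is, however, a gap at the crux. The sentence equating ``no edge of $E(\mathcal P)\cup M$ joins two distinct paths of $\mathcal Q$'' with the containment $E(G[V(\mathcal Q)])\setminus E(\mathcal Q)\subseteq M_1\cup\dots\cup M_{m-1}$ is not an equivalence: the containment, which is what the induction hypothesis actually requires (the restricted $M_i$ must \emph{partition} $E(G[V(\mathcal Q)])\setminus E(\mathcal Q)$), also demands that no edge of $E(\mathcal P)\cup M$ be a \emph{chord} of a single lifted path. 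Your $M$-edge argument does cover chords (it shows every $M$-edge with both ends in $V(\mathcal Q)$ is traversed by its path), but your $E(\mathcal P)$-edge argument is only stated for $a\in V(Q_i)$, $b\in V(Q_j)$ with $Q_i\neq Q_j$. Chords of this kind are not excluded by your construction --- for instance the Menger path $P'$ may have a chord in $H$, or the lift may enter a contracted pair at the vertex not used by the next preimage edge --- so they must be ruled out by argument; the paper avoids this issue by taking each lift to be an induced path in the graph with edge set $E(\mathcal P)\cup M$.

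The fix is a rerun of your own argument, so the gap is small: if $ab\in E(\mathcal P)$ has both ends on the same path $Q\in\mathcal Q$ but $ab\notin E(Q)$, then the maximal common subpath of $Q$ and $P$ through $a$ cannot contain $b$ (otherwise it would contain the edge $ab$, putting $ab\in E(Q)$), and $a$ cannot be an endpoint of $Q$ (a pendant endpoint has $ab$ as its unique edge, and since $X\cap Y=\emptyset$ after your normalisation $Q$ has positive length, again forcing $ab\in E(Q)$); hence $Q$ leaves $a$ along an $M$-edge, and symmetrically leaves $b$ along an $M$-edge, and these two $M$-edges are distinct and joined by $ab$, contradicting that $M$ is an induced matching. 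With this case added (or with lifts chosen induced in $(V(G),E(\mathcal P)\cup M)$ as in the paper), your proof is correct.
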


\begin{proof}
    First note that we may without loss of generality assume that $V(G)=V(\mathcal P)$, by restricting $G,X,Y$ to $ V(\mathcal P)$. This implies that $E(G)=E(\mathcal P)\cup (\cup \mathcal M)$, where $\cup \mathcal M:=\bigcup_{M\in \mathcal M} M$.

    We prove the statement by induction on $m$. If $m=0$, then $\mathcal M=\emptyset$. In particular, $E\left(G[V(\mathcal P)]\right)\setminus E(\mathcal P)=\emptyset$, and so the $2^0k=k$ paths of $\mathcal P$ are pairwise non-adjacent.

    We now show the inductive step. We may assume that the paths of $\mathcal P$ are chosen such that the sum of the lengths of paths in $\mathcal{P}$ is smallest possible among all collections of $k$ disjoint $X\dash Y$-paths. This immediately implies that every $P \in \mathcal{P}$ is an induced path in $G$, and that every path $P \in \mathcal{P}$ intersects $X$ and $Y$ only in its endpoints.
    
    Let $M\in \mathcal M$, chosen arbitrarily, and write $\mathcal M^\ast:=\mathcal M\setminus \{M\}$. We define $G'=G-E(\mathcal M^\ast)$, and let $G''$ be obtained from $G'$ by contracting the edges of $M$.\footnote{An edge is  \emph{contracted} by identifying its end vertices, and removing any resulting loops and parallel edges.} Let $f:V(G)\rightarrow V(G'')$ denote the mapping of vertices underlying the resulting contraction. The sets corresponding to $X,Y$ in $G''$ are then $X':=f(X),Y':=f(Y')$.

    By Menger's theorem (\cref{thm:menger}), there exists either
    \begin{enumerate}
        \item a collection $\mathcal P'$ of $2^{m-1}k$ disjoint $X'\dash Y'$-paths in $G''$, or
        \item a set $Z'$ of size less than $2^{m-1}k$ vertices separating $X'$ and $Y'$ in $G''$.
    \end{enumerate}
    
    First suppose we are in case (2). Let $Z:=f^{-1}(Z')$. As we have contracted by a matching, the preimage of any vertex in $G''$ is of size at most two, and so $|Z|\leq 2|Z'|<2^mk$. We claim $Z$ separates $X$ and $Y$ in $G'$, which would be a contradiction as there are at least $2^mk$ pairwise disjoint $X\dash Y$-paths in $G'$ (the paths in $\mathcal P$ are not affected by removing $M^\ast$). If $P$ is an $X\dash Y$-path in $G'$, then $f(V(P))$ corresponds to the vertex set of a  $X'\dash Y'$ walk in $G''$, from which we can extract an $X'\dash Y'$-path $P'$. Given that $Z'$ separates $X'$ and $Y'$ in $G''$, there exists $v'\in Z'\cap V(P')\neq \emptyset$. By construction of $P'$, there exists $v\in V(P)$ such that $f(v)=v'$. By definition of $Z$, $v\in Z$. Hence, $v\in Z\cap V(P)$ as desired. Therefore, we are necessarily in case (1).

    Given a path $P'\in \mathcal P'$, it is easily seen that $G'[f^{-1}(V(P'))]$ is connected and so there exists an $X\dash Y$ path $P$ in $G'$ such that $f(V(P)) \subseteq V(P')$. We may further suppose that this path is induced in $G'$. We call such a path a \emph{lift} of $P'$. Let $\mathcal P_2$ be the collection of lifts of paths of $\mathcal P'$. Given that the paths of $\mathcal P'$ are pairwise disjoint, so are those in $\mathcal P_2$.

    Let $\mathcal M_2$ be the collection of matchings of $\mathcal M^*$ restricted to edges in $E\left(G[V(\mathcal P_2)]\right)\setminus E(\mathcal P_2)$.

    We claim $\mathcal M_2$ partitions the edges of $E\left(G[V(\mathcal P_2)]\right)\setminus E(\mathcal P_2)$. The fact that the matchings in $M_2$ are pairwise disjoint is direct from their construction as restrictions of matchings in $\mathcal M^*$. Let $e=uv\in E\left(G[V(\mathcal P_2)]\right)\setminus E(\mathcal P_2)$, and suppose for a contradiction that $e$ is not in any matching of $\mathcal M_2$.
    
    First suppose that $e\in M$. We cannot have $u,v\in V(P)$ for $P\in \mathcal P_2$, since $e\notin E(\mathcal P_2)$, $M \subseteq E(G')$ and $P$ is induced in $G'$. Hence, $u\in V(P_1)$ and $v\in V(P_2)$ for distinct paths $P_1,P_2\in \mathcal P_2$. However, $P_1,P_2$ are  lifts of paths in $G''$, say $P_1',P_2'\in \mathcal P'$. In particular, $f(u)\in V(P_1')$ and $f(v)\in V(P_2')$. As $uv\in M$, $f(u)=f(v)$, and so $P_1'$ and $P_2'$ are not disjoint, which is a contradiction to the choice of $\mathcal P'$. Hence, we may now suppose that $e\notin M$.
    
    Given that $e\notin M$ and $e$ is not in any matching of $\mathcal M^*$, $e\notin \cup \mathcal M$. By our first assumption, necessarily $e\in E(\mathcal P)$. If we show that both $u$ and $v$ are incident to some edges of $M$, this would be a contradiction to the fact that $M$ is a strong matching.
    
    We will show that $u$ is incident to some edge of $M$; the proof for $v$ is analogous. Let $P\in \mathcal P_2$ be the path such that $u\in V(P)$. There are two cases to consider. First suppose $u$ is not an endpoint of $P$, i.e.\ $u$ has distinct neighbours $z_1,z_2\in V(P)$. It is impossible that both $uz_1,uz_2\in E(\mathcal P)$, given that this is a collection of paths (so $u$ cannot be incident to three edges of $E(\mathcal P)$) and we know that $e\in E(\mathcal P)$; without loss of generality say $uz_1\in \cup \mathcal M$. Recall that $\mathcal P_2$ is a collection of paths in $G'$ and so necessarily $uz_1\in M$, as desired. Now suppose that $u$ is an endpoint of $P$, hence $u\in X\cup Y$. If $u\in X\cap Y$, then $e$ would not appear in $\mathcal P$, as we have assumed those paths to be as short as possible. Hence, $u\notin Y$ and so there exists some edge $uz\in E(\mathcal P_2)$. By our hypothesis that the paths in $\mathcal P$ are shortest possible, $u$ is not an interior vertex of any path in $\mathcal P$, i.e.\ $u$ appears in at most one edge of $\mathcal P$, which we already know to be $e$. Hence $uz\notin E(\mathcal P)$ and so $uz\in \cup \mathcal M$. By the same argument as previously, $uz\in M$. This completes the proof of the claim.

    Note that $|\mathcal P_2|=|\mathcal P'|=2^{m-1}k$ and $|\mathcal M_2|=|\mathcal M^\ast|=|\mathcal M|-1=m-1$. Hence, by the induction hypothesis applied to $G'$, we obtain $k$ pairwise non-adjacent $X\dash Y$-paths in $G$, as desired.
\end{proof}

\section{Excluding a topological minor}\label{sec:subdivision}

In this section, we will prove \cref{thm:topologicalminor}. We first need some definitions and notation. Let $G$ be a graph.
If $S\subseteq V(G)$, we write $G[S]$ for the subgraph of $G$ induced by $S$.

A \emph{separation} in $G$ is to be understood as a pair $(A,B)$ of subsets of $V(G)$ such that $A \cup B=V(G)$ and there exists no edge in $G$ with endpoints in $A\setminus B$ and $B \setminus A$. It is slightly unusual but convenient for us to allow in this definition also degenerate cases in which $A\subseteq B$ or $B\subseteq A$. Given a separation $(A,B)$ of $G$, we call $A\cap B$ its \emph{separator} and refer to $|A\cap B|$ as the \emph{order} of the separation $(A,B)$.

A \emph{tree-decomposition} of $G$ is a pair $(T,\mathcal{V})$, where $T$ is a tree and $\mathcal{V}=(V_t)_{t \in V(T)}$ is a collection of subsets of $V(G)$ satisfying the following properties:
   \begin{itemize}
        \item for every $v \in V(G)$, the set $\{t \in V(T) : v \in V_t\}$ induces a non-empty subtree of $T$, and
        \item for every $uv \in V(G)$, there exists at least one $t \in V(T)$ such that $u, v \in V_t$.
    \end{itemize}
Given a tree decomposition $(T,\mathcal{V})$ of $G$, for every edge $e=t_1t_2\in E(T)$, we denote $S(e):=V_{t_1} \cap V_{t_2}$ and call $\max_{e \in E(T)}{|S(e)|}$ the \emph{adhesion} of the tree-decomposition $(T,\mathcal{V})$. Given a vertex $t \in V(T)$, the \emph{torso at $t$}, denoted by $\tau(t)$, is defined as the graph obtained from $G[V_t]$ by adding, for every edge $e \in E(T)$ incident to $t$, an edge between any two non-adjacent vertices in $S(e)$, in other words we make $S(e)$ a clique for every incident edge $e$ of $t$.

For every edge $e=t_1t_2 \in E(T)$, there exists a natural corresponding separation in $G$, namely
$$\left(\bigcup_{t \in (T-e)(t_1)}{V_t},\bigcup_{t \in (T-e)(t_2)}{V_t}\right),$$ where $(T-e)(t_i)$ denotes the set of vertices of the unique component of $T-e$ that contains $t_i$. From the definition of a tree decomposition it is not hard to see that this indeed is a separation in $G$, with $V_{t_1}\cap V_{t_2}=\left(\bigcup_{t \in (T-e)(t_1)}{V_t}\right) \cap \left(\bigcup_{t \in (T-e)(t_2)}{V_t}\right)$ being the corresponding separator. 

Finally, we say a graph $H$ is a \emph{minor} of $G$ if a graph isomorphic to $H$ can be obtained from $G$ be removing vertices and edges, and contracting edges. It is direct that if $G$ contains $H$ as a topological minor, it also contains $H$ as a minor.

The following structure theorem is a key element of our proof of \cref{thm:topologicalminor}. We use the exact statement of Erde and Weißauer \cite[Theorem 4]{erde_short_2019}, see also Grohe and Marx \cite[Theorem 4.1]{grohe_structure_2015}.

\begin{thm}[\cite{grohe_structure_2015,erde_short_2019}]\label{thm:grohemarx}
If $r\in \N$ and $G$ is a graph excluding $K_r$ as a topological minor, then $G$ admits a tree-decomposition of adhesion less than $r^2$ such that every torso either 
\begin{enumerate}[label=\normalfont(\arabic*)]
    \item has fewer than $r^2$ vertices of degree at least $2r^4$, or
    \item is $K_{h}$-minor-free, for $h=2r^2$. 
\end{enumerate}
\end{thm}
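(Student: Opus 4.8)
\textbf{Plan of proof for Theorem~\ref{thm:topologicalminor}.}
The strategy is to reduce, via the structure theorem (\cref{thm:grohemarx}), to the two ``building block'' cases appearing in its conclusion, and then to combine local solutions along the tree-decomposition. Fix $r$ and let $(T,\mathcal V)$ be a tree-decomposition of $G$ of adhesion less than $r^2$ as guaranteed by \cref{thm:grohemarx}. The overall shape of the argument is the usual induction on $|V(T)|$: we want to show that if $G$ has no set of fewer than $ck$ vertices separating $X$ and $Y$, then $G$ has $k$ pairwise non-adjacent $X$--$Y$-paths, for a suitable constant $c=c(r)$. By Menger's theorem, the hypothesis gives us $\Omega(ck)$ pairwise \emph{disjoint} $X$--$Y$-paths to work with; the task is to upgrade disjointness to non-adjacency at the cost of only a constant factor depending on $r$.

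\textbf{Step 1: handle a single torso.} First I would prove the statement for graphs that are (essentially) a single torso of one of the two types, i.e.\ the base case of the induction where $T$ is a single vertex. If the torso is $K_h$-minor-free with $h=2r^2$, then it has bounded average degree, hence bounded degeneracy $\le g(r)$; for degenerate graphs one can bound the number of edges within distance $2$ of any path, so a strong edge colouring with $O(g(r)^2)$ colours exists, and \cref{thm:strongcolouring2} applies with constant $2^{O(g(r)^2)}=c_1(r)$. (Alternatively one runs the contraction argument of \cref{thm:strongcolouring3} directly, matching-class by matching-class.) If instead the torso has fewer than $r^2$ vertices of degree $\ge 2r^4$, I would delete that bounded set $D$ of high-degree vertices: deleting $|D|<r^2$ vertices reduces both the number of disjoint paths and the separation bound by an additive $O(r^2)$, and what remains has maximum degree $<2r^4$, so again \cref{thm:strongcolouring2} (with $\chi_s'\le 2\Delta^2$) gives the $k$ non-adjacent paths with constant $c_2(r)=2^{O(r^8)}$ plus the additive correction. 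So each torso type is handled with a constant depending only on $r$; set $c_0(r)=\max(c_1,c_2)$ plus slack for the deleted vertices.

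\textbf{Step 2: glue along the tree.} Now suppose $|V(T)|\ge 2$; pick an edge $e=t_1t_2$ of $T$ with corresponding separation $(A,B)$ of order $s<r^2$, chosen so that one side, say $B$, induces a subtree whose bags we understand inductively (e.g.\ $B$ is the union of bags on one side and we peel off a leaf block). Here is where the main care is needed. Run Menger in $G$: either there is a small separator (and we are done), or there are many disjoint $X$--$Y$-paths. Each such path, when it crosses between $A\setminus B$ and $B\setminus A$, must pass through the separator $A\cap B$ of size $<r^2$; so at most $<r^2$ of the disjoint paths can use both sides nontrivially, and after deleting those $O(r^2)$ paths (again an additive loss) every remaining path lies entirely in $G[A]$ or entirely in $G[B]$. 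Redefine $X,Y$ appropriately (adding the separator $A\cap B$ into $X$ and into $Y$ to make local $X$--$Y$-paths legitimate) so that the problem splits: we need $k_A$ non-adjacent paths inside $G[A]$ and $k_B$ inside $G[B]$ with $k_A+k_B\ge k$. Because $G[A]$ and $G[B]$ are again $K_r$-topological-minor-free and inherit compatible tree-decompositions with the same adhesion bound, induction applies to $B$, and $A$ (which is ``$G$ minus a proper part of the tree'') either is handled inductively too or, in the leaf case, reduces to Step~1. The constant $c(r)$ then satisfies a recursion of the form $c(r) \le c_0(r) + O(r^2)$ that is independent of the number of bags — the key point being that the additive $O(r^2)$ losses at each separator are absorbed because paths that are split off across a separator are charged against the \emph{separator}, not multiplied down the tree.

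\textbf{Main obstacle.} The delicate part is the gluing step: ensuring that non-adjacency is preserved across the interface $A\cap B$. A path produced inside $G[A]$ and a path produced inside $G[B]$ could be adjacent via an edge of $G$ with one end in $A\setminus B$ and one in $B\setminus A$ — but there are \emph{no} such edges, by definition of a separation; the only danger is edges within $A\cap B$, so one must arrange that the local solutions each avoid using more than a bounded number of separator vertices, or route the local paths so their traces on $A\cap B$ are ``far apart'' in the bounded set $A\cap B$ — since $|A\cap B|<r^2$ is a constant, one can simply afford to throw away all but a bounded number of local paths touching the separator at each of the boundedly-many edges of $T$ used in the decomposition into leaf-blocks, but one must check this can be organised so the \emph{total} additive loss over the whole tree stays bounded. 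The clean way to do this is to not recurse edge-by-edge on all of $T$ at once, but to contract each bag to its torso, solve inside torsos, and show the torso solutions can be stitched along adhesion sets of bounded size with only an additive $O(r^2)$ loss per adhesion set \emph{incident to a path}, then bound the number of such incidences by the number of paths times $s$; balancing these bounds is exactly the accounting that needs to be done carefully. I expect the final constant to be roughly $c_{\ref{thm:topologicalminor}}(r)=2^{O(r^8)}$, dominated by the strong-chromatic-index factor in the bounded-degree torso case.
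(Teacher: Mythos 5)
Your text does not address the statement at issue. The statement is \cref{thm:grohemarx} itself --- the structure theorem for graphs with no topological $K_r$-minor, asserting the existence of a tree-decomposition of adhesion less than $r^2$ whose torsos either have fewer than $r^2$ vertices of degree at least $2r^4$ or are $K_{2r^2}$-minor-free. In the paper this is an imported result of Grohe and Marx (and Erde--Wei{\ss}auer) and is quoted without proof; establishing it requires the global structure theory of graphs excluding a topological minor (splitting at bounded sets of high-degree vertices, analysing the bounded-degree remainder, etc.), none of which appears in your write-up. What you have written instead is a plan for \cref{thm:topologicalminor}, explicitly \emph{assuming} \cref{thm:grohemarx} as a black box, so as a proof of the stated theorem it is vacuous: you take as a hypothesis exactly what you were asked to prove.

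Even judged as a sketch of \cref{thm:topologicalminor}, Step 1 contains a genuine error: bounded degeneracy of a $K_h$-minor-free torso does \emph{not} give a bounded strong chromatic index (a star $K_{1,n}$ is $1$-degenerate and planar but has strong chromatic index $n$), so the route ``$K_h$-minor-free $\Rightarrow$ degeneracy $\le g(r)$ $\Rightarrow$ $\chi_s' = O(g(r)^2)$ $\Rightarrow$ apply \cref{thm:strongcolouring2}'' collapses. The paper avoids this by handling the minor-free torsos with \cref{lemma:minorfree}, i.e.\ contracting each disjoint path to a vertex and applying the Duchet--Meyniel independence bound, and it organises the tree-decomposition argument differently as well (minimal counterexample, \cref{claim:separation}, orienting $T$ to find a bag meeting all paths, and lifting paths through the augmented torso $H$), rather than the edge-by-edge gluing with additive losses that you propose.
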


Broadly speaking, our proof of \cref{thm:topologicalminor} will proceed as follows. Given a collection of $X\dash Y$-paths in a smallest counterexample $G$, we will apply \cref{thm:grohemarx} and find a torso of the tree decomposition which intersects every path in the collection. Then, in order to find the desired collection of paths, we will either apply our result for bounded maximum degree (\cref{thm:strongcolouring2}), if we are in case (1), or use the following lemma, if we are in case (2).

\begin{lem}\label{lemma:minorfree}
    If $h,k\in \N$, $G$ is a $K_h$-minor-free graph, $X, Y \subseteq V(G)$ and there exists $k$ pairwise disjoint $X\dash Y$-paths in $G$, then there exists at least $\frac{k}{2(h-1)}$ pairwise non-adjacent $X\dash Y$-paths in $G$.
\end{lem}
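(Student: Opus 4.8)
The plan is to exploit the well-known fact that $K_h$-minor-free graphs are sparse, in particular that they have bounded degeneracy, and combine this with the contraction idea already used in the proof of \cref{thm:strongcolouring3}. Concretely, a $K_h$-minor-free graph $G$ has a proper vertex colouring with $O(h\sqrt{\log h})$ colours, but we will not even need anything that strong: it suffices that $G$ is $(h-2)$-degenerate (a classical bound, since $K_h$-minor-free graphs have at most roughly $c h\sqrt{\log h}\cdot n$ edges, but more simply one can use that every subgraph has a vertex of degree at most $h-2$ when $h$ is small, or just cite the degeneracy bound), hence properly vertex-colourable with $h-1$ colours. Actually, to get the clean constant $2(h-1)$, the cleanest route is: take a proper vertex colouring of $G$ with $h-1$ colour classes $C_1,\dots,C_{h-1}$.

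First I would, as in the bounded-degree proof, pass to a minimal counterexample: choose the $k$ disjoint $X\dash Y$-paths $\mathcal P$ so that the sum of their lengths is minimal, which forces every path in $\mathcal P$ to be induced and to meet $X$ and $Y$ only at its endpoints. Since $G[V(\mathcal P)]$ is still $K_h$-minor-free, I may assume $V(G)=V(\mathcal P)$. Now the edges of $G$ not lying on the paths, i.e.\ $E(G)\setminus E(\mathcal P)$, form a subgraph that is itself $K_h$-minor-free, hence $(h-1)$-vertex-colourable; equivalently its edge set decomposes into $\binom{h-1}{2}$ classes indexed by unordered pairs of colours — but a better bookkeeping is to use the colour classes directly. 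The key structural observation is that between any two colour classes $C_i, C_j$, the bipartite graph of edges between them has maximum degree... this is where I need to be careful, so instead I would argue as follows: iteratively contract. Repeatedly pick a colour class; the edges of $E(G)\setminus E(\mathcal P)$ within a single colour class do not exist (proper colouring), and we process the $h-1$ colour classes one at a time, at each step contracting a carefully chosen matching and applying Menger's theorem, losing a factor of $2$ only for the classes that actually get contracted. A more robust and simpler approach: since $E(G)\setminus E(\mathcal P)$ is $K_h$-minor-free it is the union of $h-1$ forests by Nash-Williams (arboricity of a $K_h$-minor-free graph is at most $h-1$); orient each forest toward a root so every vertex has out-degree $\le h-1$ total, giving a decomposition of $E(G)\setminus E(\mathcal P)$ into $h-1$ matchings after a further factor — no, forests need not be matchings.

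Given these complications, the approach I would actually commit to is the contraction-and-Menger scheme of \cref{thm:strongcolouring3} applied more cleverly. Decompose $E(G)\setminus E(\mathcal P)$ into $h-1$ forests $F_1,\dots,F_{h-1}$ via Nash-Williams. For each forest $F_i$ in turn: $F_i$ is a forest, so the graph obtained from the current $X\dash Y$-paths by contracting the \emph{connected pieces} of $F_i$ is obtained by contracting trees, and the preimage of a vertex after contracting all of $F_i$ can be large, so instead contract $F_i$ edge by edge is not matching-controlled either. The honest fix: a forest $F_i$ is $2$-colourable, so split $F_i = M_i^{(1)} \cup M_i^{(2)}$ where each is a matching... but these are matchings in $F_i$, not induced matchings in $G$, so I cannot directly invoke \cref{thm:strongcolouring3}. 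Therefore the cleanest correct plan is: since $E(G) \setminus E(\mathcal P)$ is $K_h$-minor-free and has $V(G)=V(\mathcal P)$, every subgraph has a vertex incident to at most $h-2$ edges of $E(G)\setminus E(\mathcal P)$; order $V(\mathcal P)$ as $v_1,\dots,v_n$ so that each $v_t$ has at most $h-2$ back-neighbours via $E(G)\setminus E(\mathcal P)$. Then greedily $2$-colour... Summarising the genuine plan: use that $E(G)\setminus E(\mathcal P)$ decomposes into $h-1$ \emph{star forests} is false; it decomposes into at most $h-1$ forests, each of which further decomposes into $2$ matchings, and crucially since the paths in $\mathcal P$ are shortest the endpoints of these path-edges are interior to paths and each interior vertex of a path lies on exactly two path-edges, so an edge of $E(G)\setminus E(\mathcal P)$ adjacent (in $G$) to a path-edge is forced, and one re-runs the adjacency bookkeeping from the proof of \cref{thm:strongcolouring3} verbatim to conclude the contracted family remains a valid instance. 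I expect the main obstacle to be exactly this: bounding how many matchings (in the \emph{strong}/induced sense relevant to the earlier argument, or in whatever sense makes the contraction bookkeeping go through) are needed to cover $E(G)\setminus E(\mathcal P)$ in a $K_h$-minor-free graph, and getting the count down to something yielding the clean factor $2(h-1)$ rather than something quadratic in $h$; the resolution is that one does not need induced matchings here, only that each contraction step halves the target count and that there are $h-1$ steps, with the per-step correctness following from degeneracy and the shortest-path minimality exactly as before.
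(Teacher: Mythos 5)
There is a genuine gap: your proposal never converges to a complete argument, and the ingredients you lean on are not available. The claim that a $K_h$-minor-free graph is $(h-2)$-degenerate, hence properly $(h-1)$-colourable, is false — $K_h$-minor-free graphs have degeneracy (and arboricity) of order $h\sqrt{\log h}$ by Kostochka–Thomason, and $(h-1)$-colourability is precisely Hadwiger's conjecture, which is open. Likewise ``arboricity at most $h-1$'' fails for the same reason. Even setting these aside, the scheme you fall back on — decompose $E(G)\setminus E(\mathcal P)$ into classes and run the contract-and-apply-Menger step of \cref{thm:strongcolouring3} once per class — loses a factor of $2$ at \emph{every} step, so with $m$ classes it yields $k/2^{m}$ non-adjacent paths, an exponential loss in $h$; it cannot produce the linear bound $\frac{k}{2(h-1)}$. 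Your own closing sentence (``each contraction step halves the target count and there are $h-1$ steps'') describes exactly this exponential loss, and the induced-matching issue you flag (matchings in a forest are not induced matchings of $G$) is a real obstruction to invoking \cref{thm:strongcolouring3} at all.

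The paper's proof is entirely different and much shorter: contract each path $P\in\mathcal P$ of the given disjoint family to a single vertex inside $G[V(\mathcal P)]$. The resulting graph $H$ is a minor of $G$, hence $K_h$-minor-free, has exactly $k$ vertices, and two vertices of $H$ are adjacent exactly when the corresponding paths are adjacent in $G$. The theorem of Duchet and Meyniel gives $\alpha(H)\ge \frac{\v(H)}{2(h-1)}=\frac{k}{2(h-1)}$, and the paths corresponding to such an independent set are pairwise non-adjacent. In other words, the constant $2(h-1)$ comes from an independence-number bound for minor-closed classes, not from any matching or forest decomposition of the inter-path edges; no Menger-type re-routing, minimality of the path family, or colouring of $E(G)\setminus E(\mathcal P)$ is needed.
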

\begin{proof}
    Let $\mathcal{P}$ be a collection of $k$ disjoint $X\dash Y$-paths in $G$. Let $H$ be the minor of $G$ obtained from $G\left[\bigcup_{P \in \mathcal{P}}{V(P)}\right]$ by contracting each path $P \in \mathcal{P}$ into a single vertex. In this way, the vertices of $H$ have a natural one-to-one correspondence with the paths in $\mathcal{P}$, and two vertices in $H$ are adjacent if and only if the corresponding paths in $\mathcal{P}$ are adjacent. Since $G$ is $K_h$-minor-free, so is $H$. Hence, by a classical result of Duchet and Meyniel~\cite{duchet_hadwigers_1982}, we have that $H$ contains an independent set of size at least $\alpha(H)\ge \frac{\v(H)}{2(h-1)}=\frac{k}{2(h-1)}$. The subcollection $\mathcal{P}'\subseteq \mathcal{P}$ corresponding to this independent set in $H$ now consists of pairwise non-adjacent $X\dash Y$-paths, as desired. 
\end{proof}

\cref{thm:topologicalminor} follows directly from the following result, by applying Menger's theorem (\cref{thm:menger}) and choosing $c_{\ref{thm:topologicalminor}}(t)\geq \frac{1}{\varepsilon_{\ref{thm:topologicalminor2}}(t)}$. The additive $\frac{1}{2}$ is used solely for formal reasons, as it simplifies the inductive proof.

\begin{thm}\label{thm:topologicalminor2}
For every $r\in \N$, there exists $\varepsilon=\varepsilon(r)>0$ such that the following holds. 

If $G$ is a graph not containing $K_r$ as a topological minor, $X, Y \subseteq V(G)$, $k \in \mathbb{N}$ and there are $k$ pairwise disjoint $X\dash Y$-paths in $G$, then there also exists a family of at least $\varepsilon k+\frac{1}{2}$ pairwise non-adjacent $X\dash Y$-paths in~$G$.
\end{thm}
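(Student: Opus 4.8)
The proof will go by induction on $|V(G)|$. Fix the function $\varepsilon=\varepsilon(r)>0$ — its admissible values emerging from the estimates below — and a threshold $k_0(r)$ with $\varepsilon(r)\le\tfrac1{2k_0(r)}$; if $k\le k_0(r)$ then a single $X\dash Y$-path is already a family of $1\ge \varepsilon(r)k+\tfrac12$ pairwise non-adjacent paths, so assume $k\ge k_0(r)$. Since $X\dash Y$-paths, disjointness and non-adjacency all respect connected components and $\tfrac12+\cdots+\tfrac12\ge\tfrac12$, the induction hypothesis applied componentwise reduces us to the case where $G$ is connected. Fix a collection $\mathcal P$ of $k$ disjoint $X\dash Y$-paths of minimum total length; then every $P\in\mathcal P$ is induced and meets $X\cup Y$ only in its endpoints, and we may assume $V(G)=V(\mathcal P)$ (otherwise delete a vertex lying on no path of $\mathcal P$ and invoke the induction hypothesis, noting that paths inside $G[V(\mathcal P)]$ gain no adjacencies in $G$).

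Apply \cref{thm:grohemarx} to $G$, obtaining a tree-decomposition $(T,\mathcal V)$ of adhesion less than $r^2$ all of whose torsos are of type (1) or of type (2) with $h=2r^2$. For $P\in\mathcal P$ the nodes whose bag meets $V(P)$ span a subtree $T_P$ of $T$. A centroid-type analysis of $T$ relative to $\{T_P\}$, combined with Menger's theorem (\cref{thm:menger}) and the adhesion bound $r^2-1<k$ — which limits to at most $r^2-1$ the paths crossing any single adhesion set — locates a node $t^\ast$ whose bag $V_{t^\ast}$ meets every path of $\mathcal P$. The induction hypothesis is essential here: the configurations obstructing such a $t^\ast$ (a path confined to a side of a small-order separation that separates it from the other paths, or a path wandering far out of every candidate bag) are eliminated by passing to a strictly smaller graph that still excludes $K_r$ topologically and is still a counterexample, its output paths lifting back to $G$ without new adjacencies — for instance by contracting a detour across a small-order separation and re-expanding it at the end. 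I expect this localisation step to be the main obstacle of the proof.

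Trace each $P\in\mathcal P$ through $V_{t^\ast}$, replacing every maximal excursion of $P$ into a component $C$ of $T-t^\ast$ by the edge of the clique on $S(e)$ in the torso $\tau(t^\ast)$, where $e$ joins $t^\ast$ to $C$; this produces $k$ pairwise disjoint $\widehat X\dash\widehat Y$-paths in $\tau(t^\ast)$, with $\widehat X,\widehat Y$ recording the trace endpoints. If $\tau(t^\ast)$ is of type (2), it is $K_{2r^2}$-minor-free, so \cref{lemma:minorfree} supplies at least $\tfrac{k}{2(2r^2-1)}$ pairwise non-adjacent $\widehat X\dash\widehat Y$-paths. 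If $\tau(t^\ast)$ is of type (1), delete its fewer than $r^2$ vertices of degree at least $2r^4$: at most $r^2-1$ traces are destroyed, and what remains lies in a graph of maximum degree below $2r^4$, hence of strong chromatic index at most $s_0(r):=2(2r^4)(2r^4-1)+1$, so \cref{thm:strongcolouring2} turns the surviving $\ge k-r^2+1$ disjoint traces into at least $\tfrac{k-r^2+1}{2^{s_0(r)}}-1$ pairwise non-adjacent $\widehat X\dash\widehat Y$-paths (they avoid the deleted vertices, hence are non-adjacent in $\tau(t^\ast)$ as well). With $\varepsilon(r)$ chosen at most $\min\bigl(\tfrac1{2k_0(r)},\tfrac1{4(2r^2-1)},2^{-s_0(r)-1}\bigr)$ and $k_0(r)$ large, both cases yield at least $\varepsilon(r)k+\tfrac12$ pairwise non-adjacent $\widehat X\dash\widehat Y$-paths in $\tau(t^\ast)$.

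Finally, undo the tracing: replace each clique edge of an $S(e)$ used by one of these torso-paths by the detour through $C$ that created it, obtaining honest $X\dash Y$-paths of $G$. Distinct components of $T-t^\ast$ are pairwise separated by $V_{t^\ast}$ (no edge of $G$ joins the private sides of the two associated separations), and non-adjacency in $\tau(t^\ast)$ governs everything incident to $V_{t^\ast}$; together with the care taken at the localisation step to keep detours re-inserted for different torso-paths pairwise non-adjacent — discarding at most $O_r(1)$ paths, which is harmless since $k\ge k_0(r)$ — this furnishes at least $\varepsilon(r)k+\tfrac12$ pairwise non-adjacent $X\dash Y$-paths in $G$. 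This contradicts $G$ being a counterexample and completes the induction.
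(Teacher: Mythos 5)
Your outline follows the paper's architecture (structure theorem, a bag meeting all paths, the two torso cases via \cref{thm:strongcolouring2} and \cref{lemma:minorfree}, then lifting), but the step you yourself flag as ``the main obstacle'' --- locating a node $t^\ast$ with $V(P)\cap V_{t^\ast}\neq\emptyset$ for every $P\in\mathcal P$ --- is a genuine gap, and your sketched fixes do not close it. Such a bag need not exist in general: even in a connected graph, a separation of order below the adhesion bound can have some paths of $\mathcal P$ entirely in one private side and the rest entirely in the other, and then no bag meets all of $\mathcal P$; a ``centroid-type analysis'' plus Menger cannot rule this out. The paper's mechanism is exactly the one you never invoke: in a vertex-minimal counterexample, \cref{claim:separation} shows every separation of order less than $2^{8r^8+1}$ has a side meeting all of $\mathcal P$, because otherwise one applies the statement to $G[A\setminus B]$ and $G[B\setminus A]$ separately and unions the two families --- and here the additive $\tfrac12$ in the statement is doing the real work, since $(\varepsilon k_1+\tfrac12)+(\varepsilon k_2+\tfrac12)\ge\varepsilon k+\tfrac12$ once the separator is small relative to $1/\varepsilon$. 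You only use the $\tfrac12$ for components and the trivial base case. With \cref{claim:separation} in hand, one orients each tree edge toward the side met by all paths and takes a sink (\cref{claim:bagselect}). Your alternative suggestion of ``contracting a detour across a small-order separation'' is also unsound as stated: contraction does not preserve exclusion of $K_r$ as a \emph{topological} minor, so the smaller graph need not satisfy the hypothesis.

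A secondary, more repairable weakness is the lifting step. You appeal to non-adjacency in $\tau(t^\ast)$ plus unspecified ``care'' and discarding $O_r(1)$ paths to keep re-inserted detours non-adjacent; that is not an argument, and it also leaves the lifted paths ending in the trace sets $\widehat X,\widehat Y$ rather than in $X,Y$. The paper avoids any discarding by working in the auxiliary graph $H$ (an edge between any two bag vertices joined by a path internally avoiding $V_{t^\ast}$, which lies inside the torso by \cref{claim:subgraphs}): then any adjacency or intersection in $G$ between two detours, or between the tails rejoining $X$ and $Y$ along the original paths, produces a path between the two torso-paths internally avoiding $V_{t^\ast}$ and hence an $H$-edge, contradicting non-adjacency (\cref{claim:nonadjacent}). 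Since your torso-paths non-adjacent in $\tau(t^\ast)$ are a fortiori non-adjacent in $H$, this argument would rescue your lifting verbatim; but the localisation gap above remains the decisive missing idea.
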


\begin{proof}
Fix $r\in \N$; we prove the statement with the constant $\varepsilon(r):=2^{-(8r^8+3)}$. Towards a contradiction, suppose the claim is not true, and consider a counterexample $G$ with $\v(G)$ minimum. Hence, there exist $X, Y \subseteq V(G)$ and $k\in \N$ such that on the one hand, there exists a collection $\mathcal{P}$ consisting of $k$ pairwise disjoint $X\dash Y$-paths in $G$, but on the other hand, every collection $\mathcal{Q}$ of pairwise non-adjacent $X\dash Y$-paths in $G$ has size less than $\varepsilon k+\frac{1}{2}$. Note that the latter fact implies that $1<\varepsilon k+\frac{1}{2}$, so $k > \frac{1}{2\varepsilon}$. Our next claim uses the minimality assumption on $G$ to guarantee that for every separation $(A,B)$ in $G$ of sufficiently small order, one of its two sides must intersect all paths in $\mathcal{P}$.

\begin{claim}\label{claim:separation}
    If $(A,B)$ is a separation in $G$ of order $|A\cap B|<2^{8r^8+1}$, then $V(P) \cap A \neq \emptyset$ for every $P \in \mathcal{P}$ or $V(P) \cap B \neq \emptyset$ for every $P \in \mathcal{P}$.
\end{claim}

\begin{proof}[Proof of \cref*{claim:separation}.]
Suppose towards a contradiction that there exist two paths $P_1, P_2 \in \mathcal{P}$ such that $V(P_1) \subseteq A\setminus B$ and $V(P_2) \subseteq B \setminus A$. Let $\mathcal{P}_1:=\{P \in \mathcal{P} : V(P) \subseteq A \setminus B\}$, $\mathcal{P}_2:=\{P \in \mathcal{P} : V(P) \subseteq B \setminus A\}$, and let us denote $k_1:=|\mathcal{P}_1|$, $k_2:=|\mathcal{P}_2|$. Note that since $(A,B)$ is a separation, every path $P \in \mathcal{P}\setminus (\mathcal{P}_1 \cup \mathcal{P}_2)$ must intersect the separator $A\cap B$. Since the paths in $\mathcal{P}$ are pairwise disjoint, this implies that $k-(k_1+k_2)\le |A\cap B|<2^{8r^8+1}$. Note that $\mathcal{P}_1$ is a collection of $k_1 \ge 1$ pairwise disjoint $(X\cap (A \setminus B))\dash (Y\cap (A\setminus B))$-paths in $G[A\setminus B]$, and $\mathcal{P}_2$ is a collection of $k_2 \ge 1$ pairwise disjoint $(X\cap (B \setminus A))\dash (Y\cap (B\setminus A))$-paths in $G[B\setminus A]$. Since by our minimality assumption on $G$ both graphs $G[A\setminus B]$ and $G[B\setminus A]$ satisfy the hypothesis of the theorem, we find that there is a collection $\mathcal{Q}_1$ of at least $\varepsilon k_1+\frac{1}{2}$ pairwise non-adjacent $(X\cap (A \setminus B))\dash (Y\cap (A\setminus B))$-paths in $G[A\setminus B]$, and a collection $\mathcal{Q}_2$ of at least $\varepsilon k_2+\frac{1}{2}$ pairwise non-adjacent $(X\cap (B \setminus A))\dash (Y\cap (B\setminus A))$-paths in $G[B\setminus A]$. Since there are no edges in $G$ between $A \setminus B$ and $B\setminus A$, the collection $\mathcal{Q}:=\mathcal{Q}_1\cup \mathcal{Q}_2$ also consists of pairwise non-adjacent $X\dash Y$-paths in $G$. We furthermore have
\begin{align*}
    |\mathcal{Q}|&=|\mathcal{Q}_1|+|\mathcal{Q}_2|\ge \left(\varepsilon k_1+\frac{1}{2}\right)+\left(\varepsilon k_2+\frac{1}{2}\right)=\varepsilon(k_1+k_2)+1\\
    &> \varepsilon\left(k-2^{8r^8+1}\right)+1=\varepsilon k+\frac{1}{2}+\left(\frac{1}{2}-\varepsilon 2^{8r^8+1}\right)>\varepsilon k+\frac{1}{2}.
\end{align*}

This is a contradiction on our initial assumptions that such a collection $\mathcal{Q}$ cannot exist. Hence our assumption was false, and this concludes the proof of the claim.
\renewcommand{\qedsymbol}{$\blacksquare$}
\end{proof}
Next, we apply Theorem~\ref{thm:grohemarx} to $G$, which yields a tree-decomposition $(T,(V_t)_{t \in V(T)})$ of $G$ of adhesion less than $r^2$, such that every torso $\tau(t)$ has at most $r^2$ vertices of degree at least $2r^4$, or is $K_h$-minor-free for $h:=2r^2$. 

\begin{claim}\label{claim:bagselect}
    There exists a vertex $t^\ast \in V(T)$ such that $V(P) \cap V_{t^\ast}\neq \emptyset$ for every $P \in \mathcal{P}$.
\end{claim}

\begin{proof}[Proof of \cref*{claim:bagselect}.]
For every edge $e=t_1t_2$ of $T$, we have that $$\left(\bigcup_{t \in (T-e)(t_1)}{V_t},\bigcup_{t \in (T-e)(t_2)}{V_t}\right)$$ forms a separation in $G$ of order $|S(e)|<r^2<2^{8r^8+1}$. Hence, by \cref{claim:separation}, every path in $\mathcal{P}$ intersects $\bigcup_{t \in (T-e)(t_1)}{V_t}$, or every path in $\mathcal{P}$ intersects $\bigcup_{t \in (T-e)(t_2)}{V_t}$. We can therefore find an orientation $\vec{T}$ of  $T$ such that for every edge $e=t_1t_2$ oriented from $t_1$ to $t_2$ in $\vec{T}$, we have $V(P) \cap \left(\bigcup_{t \in (T-e)(t_2)}{V_t}\right)\neq \emptyset$ for every $P \in \mathcal{P}$. Since $T$ is a tree, there must exists a vertex $t^\ast \in V(T)$ that is a sink in the orientation $\vec{T}$ of $T$. We now claim that $V(P) \cap V_{t^\ast}\neq\emptyset$ for every $P \in \mathcal{P}$. Suppose otherwise towards a contradiction. Let $P \in \mathcal{P}$ be such that $V(P) \cap V_{t^\ast}=\emptyset$ and let $R:=\{t \in V(T) : V(P) \cap V_t \neq \emptyset\}$. Since $P$ is a connected subgraph of $G$, it readily follows from the definition of a tree-decomposition that $R$ induces a subtree of $T$, which does not include $t^\ast$. Hence, there is an edge $e=t't^\ast \in E(T)$ incident to $t^\ast$ such that $R \subseteq (T-e)(t')$. This, however, contradicts the fact that $V(P) \cap \left(\bigcup_{t \in (T-e)(t^\ast)}{V_t}\right)\neq\emptyset$, which follows since $e$ is oriented from $t'$ to $t^\ast$ in $\vec{T}$. This concludes the proof of the claim.
\renewcommand{\qedsymbol}{$\blacksquare$}
\end{proof}

Let $H$ be the graph obtained from $G[V_{t^\ast}]$ by adding an edge between every pair $x,y$ of non-adjacent vertices in $G[V_{t^\ast}]$ for which there exists a path in $G$ with endpoints $x,y$ all whose internal vertices are in $V(G)\setminus V_{t^\ast}$. 

\begin{claim}\label{claim:subgraphs}For every pair of vertices $x,y \in V_{t^\ast}$ with $xy \notin E(G)$ for which there exists a path in $G$ with endpoints $x,y$ all whose internal vertices are in $V(G)\setminus V_{t^\ast}$, there exists an edge $f=tt^\ast \in E(T)$ incident with $t^\ast$ such that $x,y \in S(f)$. In particular, $G[V_{t^\ast}]\subseteq H\subseteq \tau(t^\ast)$.
\end{claim}
\begin{proof}[Proof of \cref*{claim:subgraphs}.]
Let $P$ be an $x\dash y$-path in $G$ such that $V(P) \cap V_{t^\ast}=\{x,y\}$. Let $S:=\{s \in V(T) : V_s \cap (V(P)\setminus \{x,y\}) \neq \emptyset\}$. It follows readily from the definition of a tree-decomposition (and since $P-\{x,y\}$ is a connected subgraph of $G$) that $S$ induces a connected subgraph of $T$, i.e., $T[S]$ is a subtree of $T$. We furthermore have $V_{t^\ast}\cap (V(P) \setminus \{x,y\})=\emptyset$, and thus $t^\ast \notin S$. Therefore, there exists an edge $f=tt^\ast$ incident with $t^\ast$ such that $S$ is contained in $(T-f)(t)$. Let $x,x_1,\ldots,x_\ell,y$ be the vertex-trace of $P$. By definition of a tree-decomposition, there exist bags $V_{t_1}$ and $V_{t_2}$ such that $x,x_2 \in V_{t_1}$ and $x_{\ell-1},y \in V_{t_2}$. This directly implies that $t_1,t_2 \in S\subseteq (T-f)(t)$. Hence, we have
$$x, y\in \left(\bigcup_{s \in (T-f)(t)}{V_s}\right) \cap V_{t^\ast}=V_t \cap V_{t^\ast}.$$ This proves that $x,y \in S(f)$, as desired. This concludes the proof.
\renewcommand{\qedsymbol}{$\blacksquare$}
\end{proof}

Next, we define a family $\mathcal{P}^\ast$ of $k$ disjoint paths in $H$ as follows. For every path $P \in \mathcal{P}$, let $P^\ast$ denote the path in $H$ that has vertex-set $V(P) \cap V_{t^\ast}$ and visits the vertices in $V(P) \cap V_{t^\ast}$ in the same order as $P$. This indeed forms a path in $H$, since for every subpath $x,x_1,\ldots,x_\ell,y$ of $P$ with $x, y \in V_{t^\ast}$ and $x_1,\ldots,x_\ell \notin V_{t^\ast}$, we have $xy \in E(H)$ by definition.

For every endpoint $v$ of a path $P \in \mathcal{P}$, let us denote by $v^\ast\in V_{t^\ast}$ the unique vertex in $V(P) \cap V_{t^\ast}$ that is closest to $v$ along the path $P$. Necessarily, $v^\ast$ is an endpoint of $P^\ast$. Using this notation, we now define two subsets $X^\ast, Y^\ast\subseteq V_{t^\ast}$ as $$X^\ast:=\{v^\ast:v \in X \text{ and }v \text{ is the endpoint of some path in }\mathcal{P}\},$$
$$Y^\ast:=\{v^\ast:v \in Y \text{ and }v \text{ is the endpoint of some path in }\mathcal{P}\}.$$In particular, $\mathcal{P}^\ast$ is a collection of $k$ pairwise disjoint $X^\ast\dash Y^\ast$-paths in $H$. 

\begin{claim}\label{claim:restrictedpaths}
    There exists a family $\mathcal{Q}^\ast$ consisting of pairwise non-adjacent $X^\ast\dash Y^\ast$-paths in $H$ such that $|\mathcal{Q}^\ast|\ge \varepsilon k + \frac{1}{2}$.
\end{claim}

\begin{proof}[Proof of \cref*{claim:restrictedpaths}.]
By the properties of the tree-decomposition $(T,(V_t)_{t \in V(T)})$, we know that $\tau(t^\ast)$ either has at most $r^2$ vertices of degree at least $2r^4$, or is $K_h$-minor-free for $h=2r^2$. In particular, the same is true for the subgraph $H$ of $\tau(t^\ast)$. 

Let us start with considering the first case. Let $Z \subseteq V_{t^\ast}$ denote the set of vertices of degree at least $2r^4$ in $H$. Then we know that $|Z|\le r^2$ and $\Delta(H-Z)<2r^4:=\Delta$. Let $\mathcal{P}'$ be the set of paths of ${\mathcal P}^\ast$ which do not intersect $Z$. Then, since the paths in $\mathcal{P}^\ast$ are pairwise disjoint, we have that $|\mathcal{P}^\ast|\ge k-|Z|\ge k-2r^4$.  Hence, \cref{thm:strongcolouring2} implies that there exists a collection $\mathcal{Q}^\ast$ of pairwise non-adjacent $(X^\ast\setminus Z)\dash (Y^\ast\setminus Z)$-paths in $H-Z$ such that 
$$|\mathcal{Q}^\ast|\ge \left\lfloor\frac{1}{2^{2\Delta^2}}(k-2r^4)\right\rfloor.$$
Clearly, $\mathcal{Q}^\ast$ is also a family of pairwise non-adjacent $X^\ast\dash Y^\ast$-paths in $H$. Using that $k>\frac{1}{2\varepsilon}$ and $\varepsilon=2^{-(8t^8+3)}$, we can lower bound its size as follows.
$$|\mathcal{Q}^\ast|\ge \frac{k}{2^{2\Delta^2}}-\frac{2r^4}{2^{2\Delta^2}}-1=\frac{k}{2^{8r^8}}-\frac{2r^4}{2^{8r^8}}-1=8\varepsilon k -\frac{2r^4}{2^{8r^8}}-1=\varepsilon k+\frac{1}{2} + \left(7\varepsilon k-\frac{2r^4}{2^{8r^8}}-\frac{3}{2}\right)>\varepsilon k+\frac{1}{2}.$$ This establishes the claim in the first case. 

Next, consider the case that $H$ is $K_h$-minor-free. Then, by Lemma~\ref{lemma:minorfree} there exists a collection $\mathcal{Q}^\ast$ of pairwise non-adjacent $X^\ast\dash Y^\ast$-paths in $H$ of size at least $\frac{k}{2(h-1)}=\frac{k}{2(2r^2-1)}>2^{-(8r^8+2)} k=2\varepsilon k>\varepsilon k+\frac{1}{2}$, as desired. This concludes the proof of the claim also in the second possible case.
\renewcommand{\qedsymbol}{$\blacksquare$}
\end{proof}

We now finish the proof of the theorem by using $\mathcal{Q}^\ast$, as given by \cref{claim:restrictedpaths}, to construct a family $\mathcal{Q}$ of pairwise non-adjacent $X\dash Y$-paths in $G$ of size $|\mathcal{Q}|=|\mathcal{Q}^\ast|\ge \varepsilon k +\frac{1}{2}$.

For every edge $xy\in E(H)\setminus E(G[V_{t^\ast}])$, pick and fix a path $P_{xy}$ in $G$ that has endpoints $x,y$ and no internal vertices in $V_{t^\ast}$ (such a path always exists by definition of $H$). Furthermore, for every edge $xy \in E(G[V_{t^\ast}])$, we let $P_{xy}$ denote the path consisting of the single edge $xy$ in $G$. 

Now consider any path $Q^\ast \in \mathcal{Q}^\ast$ and let $x_1,x_2,\ldots,x_q$ be its sequence of vertices, such that $x_1 \in X^\ast$ and $x_q \in Y^\ast$. Then, by definition, there exist $x \in X$, $y \in Y$ such that $x^\ast=x_1$, $y^\ast=x_q$. We now define $W(Q^\ast)$ as the walk in $G$ that starts at $x$, follows the unique path in $\mathcal{P}$ that $x$ is an endpoint of, until it reaches $x^\ast=x_1$, then follows the concatenation of the paths $P_{x_ix_{i+1}}$ for $1 \le i <q$ and then follows the unique path in $\mathcal{P}$ that $y$ is an endpoint of, until it reaches $y$.

\begin{claim}\label{claim:nonadjacent}
    If $Q_1^\ast, Q_2^\ast \in \mathcal{Q}^\ast$ are distinct, then $W(Q_1^\ast)$ and $W(Q_2^\ast)$ are non-adjacent in $G$.
\end{claim}

\begin{proof}[Proof of \cref*{claim:nonadjacent}.]
Suppose towards a contradiction that there exist $a\in V(W(Q_1^\ast)), b \in V(W(Q_2^\ast))$ that are at distance at most $1$ in $G$. Let $a' \in V(Q_1^\ast)$ be a vertex closest to $a$ along $W(Q_1^\ast)$, and let $b' \in V(Q_2^\ast)$ be defined similarly for $b$. In particular, there exist paths $R_1$ and $R_2$ that form subwalks of $V(W(Q_1^\ast))$ and $V(W(Q_2^\ast))$, respectively, such that $R_1$ has endpoints $a, a'$ and $V(R_1) \cap V_{t^\ast}=\{a'\}$, and analogously  $R_2$ has endpoints $b, b'$ and $V(R_2) \cap V_{t^\ast}=\{b'\}$. Now, as $a=b$ or $ab \in E(G)$, then the walk $W$ in $G$ that starts at $a'$, follows $R_1$ to $a$, moves to $b$, and follows $R_2$ until it reaches $b'$, satisfies $V(W)\cap V_{t^\ast}=\{a',b'\}$. This implies that there exists an $a'\dash b'$-path $R$ in $G$ with $V(R)\subseteq V(W)$, in particular we have $V(R)\cap V_{t^\ast}=\{a',b'\}$. By definition of $H$, this implies that $a'=b'$ or $a' b' \in E(H)$, in either case a contradiction, since $Q_1^\ast$ and $Q_2^\ast$ are be non-adjacent in $H$. This concludes the proof of the claim.
\renewcommand{\qedsymbol}{$\blacksquare$}
\end{proof}

We can now define $\mathcal{Q}$ by, for every $Q^\ast\in \mathcal Q^\ast$, short-cutting the walk $W(Q^\ast)$ into a path in $G$ that has the same endpoints and such that $V(Q)\subseteq V(W(Q^\ast))$. By \cref{claim:nonadjacent}, any two distinct paths in $\mathcal{Q}$ are non-adjacent. Clearly, $|\mathcal{Q}|=|\mathcal{Q}^\ast|$ by definition, and \cref{claim:restrictedpaths} now implies that $\mathcal{Q}$ consists of at least $\varepsilon k+\frac{1}{2}$ pairwise non-adjacent $X\dash Y$-paths in $G$. This yields the desired contradiction, completing the proof of the theorem.
\end{proof}

\section{Subcubic graphs}\label{sec:subcubic}

In this section, we show our results on subcubic graphs. We begin by proving \cref{thm:kpathssubcubic}, which we restate for convenience.

\kpathssubcubic*

\begin{proof}
    Let $F:=E(G[V(\mathcal P)])\setminus E(\mathcal P)$. By \cref{thm:strongcolouring3}, it suffices to partition $F$ into four induced matchings. By hypothesis, no two edges of $F$ share an end vertex.

    A standard tool for studying strong edge colouring is to find a proper vertex colouring of the square of the line graph; we slightly vary this argument given that we only want to partition the edges in $F$. We construct the auxiliary graph $H$ with vertex set $F$ and such that $e_1,e_2\in F$ are adjacent if $\dist_G(e_1,e_2)=1$. In other words, $e_1,e_2$ are adjacent if and only if one of the end vertices of $e_1$ and one of the end vertices of $e_2$ are consecutive vertices on one of the paths in $\mathcal P$. This implies that $\Delta(H)\leq 4$, since the maximum degree in a path is two and since no two edges in $F$ are incident.
    
    By construction, a proper vertex-colouring of $H$ with four colours yields the desired partition; each colour class is an induced matching. It suffices to show that $H$ is $3$-degenerate. Suppose, for a contradiction, that $H$ has a $4$-regular subgraph $H'$. Let $P \in \mc{P}$ such that some edge  $e \in V(H')$ has an end  $u \in V(P)$ and choose such $e$ and $u$ so that the distance from $u$ to an end of $P$ along $P$ is minimum. Then at most one neighbour of $u$ in $P$ is an end of an edge in $V(H')$. It follows that $e$ has degree at most three in $H'$, obtaining the desired contradiction to the assumption that $H'$ is $4$-regular.

\end{proof}

We note that the $4$-colouring of the auxiliary graph $H$ in the proof above is best possible. The configuration shown in \cref{fig:4coloursexample} (where the horizontal edges are part of the paths of $\mathcal P$), is an example in which we cannot partition the edges outside $\mathcal P$ into three induced matchings. In particular, in this case the auxiliary graph is the Moser spindle \cite{moser_problems_1961}, which is easily verified to not be $3$-colourable.

\begin{figure}
         \centering
         \begin{tikzpicture}[scale=1,
			dot/.style = {circle, fill, minimum size=#1,
			inner sep=0pt, outer sep=0pt},
			dot/.default = 4pt]

            \node (A0) at (0.2,3) {};
            \node (B0) at (0.2,2) {};
            \node (C0) at (0.2,1) {};
            
            \node[dot] (A1) at (1,3) {};
            \node[dot] (B1) at (1,2) {};
            \node[dot] (C1) at (1,1) {};

            \node[dot] (A2) at (2,3) {};
            \node[dot] (B2) at (2,2) {};
            \node[dot] (C2) at (2,1) {};

            \node[dot] (B3) at (3,2) {};
            \node[dot] (C3) at (3,1) {};

            \node[dot] (A4) at (4,3) {};
            \node[dot] (B4) at (4,2) {};
            \node[dot] (C4) at (4,1) {};

            \node[dot] (A5) at (5,3) {};
            \node[dot] (B5) at (5,2) {};
            \node[dot] (C5) at (5,1) {};

            \node (A6) at (5.8,3) {};
            \node (B6) at (5.8,2) {};
            \node (C6) at (5.8,1) {};

            \draw (A0) to (A1);
            \draw (A1) to (A2);
            \draw (A2) to (A4);
            \draw (A4) to (A5);
            \draw (A5) to (A6);

            \draw (B0) to (B1);
            \draw (B1) to (B2);
            \draw (B2) to (B3);
            \draw (B3) to (B4);
            \draw (B4) to (B5);
            \draw (B5) to (B6);

            \draw (C0) to (C1);
            \draw (C1) to (C2);
            \draw (C2) to (C3);
            \draw (C3) to (C4);
            \draw (C4) to (C5);
            \draw (C5) to (C6);

            \draw (C1) to (A2);
            \draw (A1) to (B2);
            \draw (B1) to (C2);

            \draw (B3) to (C3);

            \draw (A4) to (C5);
            \draw (C4) to (B5);
            \draw (B4) to (A5);

		\end{tikzpicture}
        \caption{Example requiring  four colours for any strong edge colouring of non-horizontal edges.}
        \label{fig:4coloursexample}
    \end{figure}

We now prove \cref{thm:twopathssubcubic}, which we restate for convenience.

\twopathssubcubic*

\begin{proof}
    We begin by proving the first part of the statement.

    We define a \emph{path system} as a quadruple $\mathcal H=(H,A,B,\mathcal Q)$ where $H$ is a graph, $A,B\subseteq V(H)$ and $\mathcal Q$ is a $5$-tuple of five disjoint $A\dash B$-paths such that
    \begin{enumerate}
        \item $V(H)=V(\mathcal Q)$,
        \item no vertex in $A$ is incident in $H$ to any edge in $E(H)\setminus E(\mathcal Q)$,
        \item every vertex of $V(H)\setminus A$ is incident in $H$ to exactly one edge in $E(H)\setminus E(\mathcal Q)$, and
        \item there does not exist any collection $\mathcal Q'$ of $5$ pairwise disjoint $A\dash B$-paths such that $V(\mathcal Q')\subsetneq V(H)$.
    \end{enumerate}
    Note that conditions (1) and (4) imply that the paths in $\mathcal Q$ contain no vertices in $A\cup B$ other than their endpoints, and if $x\in A\cap B$ then one of the paths consists of exactly $x$. In particular $|A|=|B|=5$.

    We may suppose without loss of generality that $\mathcal G=(G,X,Y,\mathcal P)$ is a path system. 
    By restricting $G$, $X$ and $Y$ to vertices $V(\mathcal P)$, we may suppose that (1) holds; any pair of non-adjacent paths in an induced subgraph remains non-adjacent in the original graph. We may suppose (2) holds as if a vertex $x\in X$ is incident to some edge in $E(H)\setminus E(\mathcal Q)$, we can add a new vertex $x'$ to $G$ as well as the edge $x'x$, replace $X$ by $(X\setminus \{x\})\cup \{x'\}$, and prepend $x'x$ to the path of $\mathcal P$ with $x$ as an endpoint. Any $x'\dash Y$-path in the new graph directly yields a $x\dash Y$-path with the same set of neighbours. We may suppose (3) holds for vertices not in $Y$ given that finding a pair of non-adjacent paths in a graph directly yields such a pair in a subdivision of this graph. Furthermore, if some vertex $y\in Y\setminus X$ is not incident to some edge in $E(G)\setminus E(\mathcal P)$, then $y$ has a neighbour in the path of $\mathcal P$ of which it is an endpoint, which exists by (1) (this path cannot be a singleton as $y\notin X$); say $y'y\in E(\mathcal P)$. We may then remove $y$ from $G$ and from its path of $\mathcal P$ and replace $Y$ by $(Y\setminus\{y\})\cup\{y'\}$; any $X\dash y'$-path directly extends to a $X\dash y$-path with the same neighbours. By repeating this argument, we may suppose that (3) holds for vertices in $Y$. Finally, we may suppose (4), as otherwise we could then replace $\mathcal P$ with these paths. It is easily verified that none of these reductions are in conflict.

    In the following, for $k\in \N$ we write $[k]=\{1,\dots,k\}$.
    
    We now define an operation (which will have two variants) which allows us to easily construct and represent path systems. Let $\mathcal H=(H,A,B,\mathcal Q=(Q_1,Q_2,Q_3,Q_4,Q_5))$ be a path system. For $i\in [5]$, write $b_i$ for the vertex of $B$ in $Q_i$.

    Let $\mathcal E:=\binom{[5]}{2}$ be the collection of (unordered) pairs of integers between $1$ and $5$. Let $\{i_1,i_2\}\in \mathcal E$. We define $\mathcal H\oplus \{i_1,i_2\}$ as the path system obtained by
    \begin{itemize}
        \item adding new vertices $b_{i_1}',b_{i_2}'$ and the edges $b_{i_1}b_{i_1}'$, $b_{i_2}b_{i_2}'$ and $b_{i_1}'b_{i_2}'$ to $H$,
        \item appending the edges $b_{i_1}b_{i_1}'$ and $b_{i_2}b_{i_2}'$, respectively, to $Q_{i_1}$ and $Q_{i_2}$, and
        \item replacing $B$ with $(B\setminus \{b_{i_1},b_{i_2}\})\cup \{b_{i_1}',b_{i_2}'\}$
    \end{itemize}
    An example of this operation is provided in \cref{fig:addexample}(b).
    
    Let $\mathcal C$ be the set of cyclic permutations of length at least two with values in $[5]$. We write such cycles as $(i_1 \dots i_k)$, for instance $(i_1 \ i_2 \ i_3)=(i_2 \ i_3 \ i_1)=(i_3 \ i_1 \ i_2)$. For $(i_1\ \dots \ i_k)\in \mathcal C$, we say $\mathcal H\oplus (i_1 \ \dots \  i_k)$ is the path system obtained by
    \begin{itemize}
        \item adding new vertices $c_{i_1},\dots,c_{i_k}$ and $b_{i_1}',\dots,b_{i_k}'$ and the edges $b_{i_1}c_{i_1},\dots,b_{i_k}c_{i_k}$, $c_{i_1}b_{i_1}',\dots,c_{i_k}b_{i_k}'$ and $c_{i_1}b_{i_2}',\dots,c_{i_k}b_{i_1}'$ to $H$,
        \item appending edges $b_{i_j}c_{i_j}$ and $c_{i_j}b_{i_j}'$ to $Q_{i_j}$ for every $j\in [k]$, and 
        \item replacing $B$ with $(B\setminus \{b_{i_1},\dots,b_{i_k}\})\cup \{b_{i_1}',\dots,b_{i_k}'\}$.
    \end{itemize}
    An example is provided in \cref{fig:addexample}(c).

    \begin{figure}
        \begin{subfigure}[t]{.32\textwidth}
         \centering
         \begin{tikzpicture}[scale=0.55,
			dot/.style = {circle, fill, minimum size=#1,
			inner sep=0pt, outer sep=0pt},
			dot/.default = 4pt]

            \draw[rounded corners=5pt] (-0.5, 0.5) rectangle (0.5, 5.5) {};
            \node at (0,0.15) {$A$};

            \draw[rounded corners=5pt] (5.5, 0.5) rectangle (6.5, 5.5) {};
            \node at (6,0.15) {$B$};
            
            \node[dot] (A0) at (0,5) {};
            \node[dot] (B0) at (0,4) {};
            \node[dot] (C0) at (0,3) {};
            \node[dot] (D0) at (0,2) {};
            \node[dot] (E0) at (0,1) {};

            \node[dot] (A1) at (1,5) {};
            \node[dot] (B1) at (1,4) {};
            \node[dot] (C1) at (1,3) {};
            \node[dot] (D1) at (1,2) {};

            \node[dot] (C2) at (2,3) {};
            \node[dot] (D2) at (2,2) {};

            \node[dot] (A3) at (3,5) {};
            \node[dot] (B3) at (3,4) {};
            \node[dot] (C3) at (3,3) {};
            \node[dot] (D3) at (3,2) {};
            \node[dot] (E3) at (3,1) {};

            \node[dot] (A4) at (4,5) {};
            \node[dot] (B4) at (4,4) {};
            \node[dot] (C4) at (4,3) {};

            \node[dot] (C5) at (5,3) {};
            \node[dot] (D5) at (5,2) {};
            \node[dot] (E5) at (5,1) {};

            \node[dot] (A6) at (6,5) {};
            \node[dot] (B6) at (6,4) {};
            \node[dot] (C6) at (6,3) {};
            \node[dot] (D6) at (6,2) {};
            \node[dot] (E6) at (6,1) {};

            \draw (A1) to (B1);
            \draw (C1) to (D2);
            \draw (D1) to (C2);

            \draw (D3) to (E3);

            \draw (A3) to (B4);
            \draw (B3) to (C4);
            \draw (C3) to (A4);

            \draw (C6) to (D5);
            \draw (D6) to (E5);
            \draw (E6) to (C5);

            \draw (A6) to (B6);

            \draw (A0) to (A1);
            \draw (A1) to (A3);
            \draw (A3) to (A4);
            \draw (A4) to (A6);

            \draw (B0) to (B1);
            \draw (B1) to (B3);
            \draw (B3) to (B4);
            \draw (B4) to (B6);

            \draw (C0) to (C1);
            \draw (C1) to (C2);
            \draw (C2) to (C3);
            \draw (C3) to (C4);
            \draw (C4) to (C5);
            \draw (C5) to (C6);

            \draw (D0) to (D1);
            \draw (D1) to (D2);
            \draw (D2) to (D3);
            \draw (D3) to (D5);
            \draw (D5) to (D6);

            \draw (E0) to (E3);
            \draw (E3) to (E5);
            \draw (E5) to (E6);

		\end{tikzpicture}
        \caption{$\mathcal H$}
        \end{subfigure}
        \begin{subfigure}[t]{.32\textwidth}
         \centering
         \begin{tikzpicture}[scale=0.55,
			dot/.style = {circle, fill, minimum size=#1,
			inner sep=0pt, outer sep=0pt},
			dot/.default = 4pt]

            \draw[rounded corners=5pt] (-0.5, 0.5) rectangle (0.5, 5.5) {};
            \node at (0,0.15) {$A$};

            \draw[rounded corners=5pt] (5.5,0.5) -- (5.5,2.5) -- (6.5,2.5) -- (6.5,4.5) -- (5.5,4.5) -- (5.5,5.5) -- (6.5,5.5) -- (6.5,4.5) -- (7.5,4.5) -- (7.5,2.5) -- (6.5,2.5) -- (6.5,0.5) -- cycle {};
            \node at (6,0.15) {$B$};
            
            \node[dot] (A0) at (0,5) {};
            \node[dot] (B0) at (0,4) {};
            \node[dot] (C0) at (0,3) {};
            \node[dot] (D0) at (0,2) {};
            \node[dot] (E0) at (0,1) {};

            \node[dot] (A1) at (1,5) {};
            \node[dot] (B1) at (1,4) {};
            \node[dot] (C1) at (1,3) {};
            \node[dot] (D1) at (1,2) {};

            \node[dot] (C2) at (2,3) {};
            \node[dot] (D2) at (2,2) {};

            \node[dot] (A3) at (3,5) {};
            \node[dot] (B3) at (3,4) {};
            \node[dot] (C3) at (3,3) {};
            \node[dot] (D3) at (3,2) {};
            \node[dot] (E3) at (3,1) {};

            \node[dot] (A4) at (4,5) {};
            \node[dot] (B4) at (4,4) {};
            \node[dot] (C4) at (4,3) {};

            \node[dot] (C5) at (5,3) {};
            \node[dot] (D5) at (5,2) {};
            \node[dot] (E5) at (5,1) {};

            \node[dot] (A6) at (6,5) {};
            \node[dot] (B6) at (6,4) {};
            \node[dot] (C6) at (6,3) {};
            \node[dot] (D6) at (6,2) {};
            \node[dot] (E6) at (6,1) {};

            \node[dot] (B7) at (7,4) {};
            \node[dot] (C7) at (7,3) {};

            \draw (A1) to (B1);
            \draw (C1) to (D2);
            \draw (D1) to (C2);

            \draw (D3) to (E3);

            \draw (A3) to (B4);
            \draw (B3) to (C4);
            \draw (C3) to (A4);

            \draw (C6) to (D5);
            \draw (D6) to (E5);
            \draw (E6) to (C5);

            \draw (A6) to (B6);

            \draw (A0) to (A1);
            \draw (A1) to (A3);
            \draw (A3) to (A4);
            \draw (A4) to (A6);

            \draw (B0) to (B1);
            \draw (B1) to (B3);
            \draw (B3) to (B4);
            \draw (B4) to (B6);

            \draw (C0) to (C1);
            \draw (C1) to (C2);
            \draw (C2) to (C3);
            \draw (C3) to (C4);
            \draw (C4) to (C5);
            \draw (C5) to (C6);

            \draw (D0) to (D1);
            \draw (D1) to (D2);
            \draw (D2) to (D3);
            \draw (D3) to (D5);
            \draw (D5) to (D6);

            \draw (E0) to (E3);
            \draw (E3) to (E5);
            \draw (E5) to (E6);

            \draw (B7) to (C7);
            \draw (B6) to (B7);
            \draw (C6) to (C7);

		\end{tikzpicture}
        \caption{$\mathcal H\oplus\{2,3\}$}
        \end{subfigure}
        \begin{subfigure}[t]{.32\textwidth}
         \centering
         \begin{tikzpicture}[scale=0.55,
			dot/.style = {circle, fill, minimum size=#1,
			inner sep=0pt, outer sep=0pt},
			dot/.default = 4pt]

            \draw[rounded corners=5pt] (-0.5, 0.5) rectangle (0.5, 5.5) {};
            \node at (0,0.15) {$A$};

            \draw[rounded corners=5pt] (7.5,0.5) -- (7.5,3.5) -- (5.5,3.5) -- (5.5,4.5) -- (7.5,4.5) -- (7.5,5.5) -- (8.5,5.5) -- (8.5,0.5) -- cycle {};
            \node at (8,0.15) {$B$};
            
            \node[dot] (A0) at (0,5) {};
            \node[dot] (B0) at (0,4) {};
            \node[dot] (C0) at (0,3) {};
            \node[dot] (D0) at (0,2) {};
            \node[dot] (E0) at (0,1) {};

            \node[dot] (A1) at (1,5) {};
            \node[dot] (B1) at (1,4) {};
            \node[dot] (C1) at (1,3) {};
            \node[dot] (D1) at (1,2) {};

            \node[dot] (C2) at (2,3) {};
            \node[dot] (D2) at (2,2) {};

            \node[dot] (A3) at (3,5) {};
            \node[dot] (B3) at (3,4) {};
            \node[dot] (C3) at (3,3) {};
            \node[dot] (D3) at (3,2) {};
            \node[dot] (E3) at (3,1) {};

            \node[dot] (A4) at (4,5) {};
            \node[dot] (B4) at (4,4) {};
            \node[dot] (C4) at (4,3) {};

            \node[dot] (C5) at (5,3) {};
            \node[dot] (D5) at (5,2) {};
            \node[dot] (E5) at (5,1) {};

            \node[dot] (A6) at (6,5) {};
            \node[dot] (B6) at (6,4) {};
            \node[dot] (C6) at (6,3) {};
            \node[dot] (D6) at (6,2) {};
            \node[dot] (E6) at (6,1) {};

            \node[dot] (A7) at (7,5) {};
            \node[dot] (C7) at (7,3) {};
            \node[dot] (D7) at (7,2) {};
            \node[dot] (E7) at (7,1) {};
            
            \node[dot] (A8) at (8,5) {};
            \node[dot] (C8) at (8,3) {};
            \node[dot] (D8) at (8,2) {};
            \node[dot] (E8) at (8,1) {};

            \draw (A1) to (B1);
            \draw (C1) to (D2);
            \draw (D1) to (C2);

            \draw (D3) to (E3);

            \draw (A3) to (B4);
            \draw (B3) to (C4);
            \draw (C3) to (A4);

            \draw (C6) to (D5);
            \draw (D6) to (E5);
            \draw (E6) to (C5);

            \draw (A6) to (B6);

            \draw (A0) to (A1);
            \draw (A1) to (A3);
            \draw (A3) to (A4);
            \draw (A4) to (A6);

            \draw (B0) to (B1);
            \draw (B1) to (B3);
            \draw (B3) to (B4);
            \draw (B4) to (B6);

            \draw (C0) to (C1);
            \draw (C1) to (C2);
            \draw (C2) to (C3);
            \draw (C3) to (C4);
            \draw (C4) to (C5);
            \draw (C5) to (C6);

            \draw (D0) to (D1);
            \draw (D1) to (D2);
            \draw (D2) to (D3);
            \draw (D3) to (D5);
            \draw (D5) to (D6);

            \draw (E0) to (E3);
            \draw (E3) to (E5);
            \draw (E5) to (E6);
            
            \draw (A6) to (A7);
            \draw (C6) to (C7);
            \draw (D6) to (D7);
            \draw (E6) to (E7);

            \draw (A8) to (A7);
            \draw (C8) to (C7);
            \draw (D8) to (D7);
            \draw (E8) to (E7);

            \draw (A7) to (C8);
            \draw (C7) to (D8);
            \draw (D7) to (E8);
            \draw (E7) to (A8);

		\end{tikzpicture}
        \caption{$\mathcal H\oplus (1 \ 3 \ 4 \ 5)$}
        \end{subfigure}

        \caption{Example of a path system $\mathcal H$ and two examples for the $\oplus$ operation. The paths are labelled from 1 to 5 from top to bottom.}
        \label{fig:addexample}
    \end{figure}

    Let $\mathcal H_0=(H_0,A_0,B_0,\mathcal Q_0)$ be the path system consisting of the graph $H_0$ with singleton vertices $V(H_0)=A_0=B_0=\{v_1,v_2,v_3,v_4,v_5\}$ and $\mathcal P_0$ the 5 paths of length $0$ in $H_0$.
    
    We now show that every path system can be obtained from $\mathcal H_0$ using the $\oplus$ operation. We say two path systems $\mathcal H_1=(H_1,A_1,B_1,\mathcal Q_1)$ and $\mathcal H_2= (H_2,A_2,B_2,\mathcal Q_2)$ are \emph{isomorphic}, which we denote by $\mathcal H_1\simeq \mathcal H_2$, if there exists a graph isomorphism $h:V(H_1)\rightarrow (H_2)$ which maps $A_1$ to $A_2$, $B_1$ to $B_2$ and $\mathcal Q_1$ to $\mathcal Q_2$ (the ordering of the paths must be the same).

    \begin{claim}\label{claim:subcubicoplus}
        For every path system $\mathcal H$, there exists some sequence $m_1,\dots,m_k\in \mathcal E\cup \mathcal C$ such that $\mathcal H\simeq\mathcal H_0\oplus m_1\oplus \dots \oplus m_k$.
    \end{claim}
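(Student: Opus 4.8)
The plan is to prove this by strong induction on $\v(H)$, at each step identifying and ``undoing'' the last $\oplus$-operation. Write $F:=E(H)\setminus E(\mathcal Q)$. Conditions~(2)--(3) in the definition of a path system say exactly that $F$ is a perfect matching of $V(H)\setminus A$ meeting no vertex of $A$, and condition~(4) forces each $Q_i$ to be an induced path, since a chord of $Q_i$ would be an edge of $F$ along which $Q_i$ could be shortened, contradicting minimality. Hence no edge of $F$ joins two vertices of the same path; writing $a_i,b_i$ for the endpoints of $Q_i$ in $A$ and $B$, it follows that for each $i$ with $b_i\notin A$ the $F$-partner $c_i$ of $b_i$ lies on some $Q_{m(i)}$ with $m(i)\neq i$. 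If $b_i\in A$ for every $i$, then by the remark after the definition all five paths are singletons, so $\mathcal H\simeq\mathcal H_0$ and the empty sequence works (this also settles the base case $\v(H)=5$). Otherwise $S:=\{i:b_i\notin A\}\neq\emptyset$, and I split into two cases according to whether some $c_i$ is itself a $B$-endpoint.

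\emph{Case A.} Suppose $c_i=b_j$ for some $i,j$, so that $b_i,b_j\notin A$ are $F$-matched to each other; let $p_i,p_j$ be their predecessors on $Q_i,Q_j$. Deleting $b_i,b_j$ and the three incident edges $p_ib_i$, $p_jb_j$, $b_ib_j$, shortening $Q_i,Q_j$ accordingly, and replacing $B$ by $(B\setminus\{b_i,b_j\})\cup\{p_i,p_j\}$ yields a path system $\mathcal H'$: conditions~(1)--(3) are immediate, and for~(4), any five disjoint $A\dash B$-paths of $\mathcal H'$ on a strictly smaller vertex set re-extend, via $p_ib_i$ and $p_jb_j$, to such a family of $\mathcal H$, a contradiction. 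Matching up vertices and edges shows $\mathcal H\simeq\mathcal H'\oplus\{i,j\}$, and since $\v(H')<\v(H)$ the induction hypothesis completes this case.

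\emph{Case B.} Suppose no $c_i$ is a $B$-endpoint. Then each $c_i$ is an interior vertex of $Q_{m(i)}$ (it cannot be an $A$-endpoint, by~(2)), so $Q_{m(i)}$ has length at least $2$ and $b_{m(i)}\notin A$; thus $m$ maps $S$ to $S$ without fixed points, and iterating $m$ produces a cycle $i_1,\dots,i_k$ with $k\ge 2$, all $i_\ell$ distinct, and $m(i_\ell)=i_{\ell+1}$ cyclically. The crux is to show that each $c_{i_\ell}$ is the \emph{penultimate} vertex of $Q_{i_{\ell+1}}$. If not, form five disjoint $A\dash B$-paths by keeping the paths indexed outside $\{i_1,\dots,i_k\}$ and, for each $\ell$, replacing $Q_{i_\ell}$ by the path that follows $Q_{i_\ell}$ from $a_{i_\ell}$ up to $c_{i_{\ell-1}}$ (a vertex of $Q_{i_\ell}$, as $m(i_{\ell-1})=i_\ell$) and then crosses the edge $c_{i_{\ell-1}}b_{i_{\ell-1}}$ to end at $b_{i_{\ell-1}}$. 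Because each $c_{i_{\ell-1}}$ is interior to $Q_{i_\ell}$ and each $b_{i_\ell}$, lying strictly beyond $c_{i_{\ell-1}}$ on $Q_{i_\ell}$, is re-used as an endpoint of another new path, these paths are pairwise disjoint and their union is $\bigcup_{\ell}V(Q_{i_\ell})$ minus the set of vertices lying strictly between $c_{i_\ell}$ and $b_{i_{\ell+1}}$ on $Q_{i_{\ell+1}}$ over all $\ell$ — a nonempty set under our assumption, contradicting~(4). Granting the crux, the last $\oplus$-layer consists precisely of the $2k$ vertices $\{b_{i_\ell},c_{i_\ell}:\ell\in[k]\}$, the two endmost vertices of each cycle-path; deleting them, truncating those paths, and updating $B$ gives a path system $\mathcal H'$ with $\v(H')=\v(H)-2k$, and one verifies that applying $\oplus\sigma$ to $\mathcal H'$, for $\sigma$ the cycle $(i_1\,i_2\,\cdots\,i_k)$ read backwards, re-creates exactly the $2k$ deleted vertices and the $3k$ deleted edges, so $\mathcal H\simeq\mathcal H'\oplus\sigma$. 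The induction hypothesis then finishes the proof.

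I expect the main obstacle to be the Case~B analysis: setting up the auxiliary map $m$, extracting a cycle, and above all the rerouting argument showing that each $c_{i_\ell}$ is penultimate, together with the fiddly (if routine) bookkeeping needed to confirm that deleting the identified ``last layer'' reproduces $\mathcal H'\oplus\sigma$ exactly — that is, that each deleted vertex has precisely the expected two or three incident edges, so no stray edges appear or vanish. Checking conditions~(1)--(4) for $\mathcal H'$ in both cases should be straightforward, in each instance using condition~(4) for $\mathcal H$ in the form ``re-extend a smaller family of paths through the deleted tails''.
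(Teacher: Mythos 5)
Your proof is correct and takes essentially the same route as the paper's: induct, peel off the last $\oplus$-layer, and split according to whether some edge of $E(H)\setminus E(\mathcal Q)$ joins two $B$-endpoints (undoing $\oplus\{i,j\}$) or not, in which case a cycle among the paths is extracted and the same rerouting argument against condition (4) shows each matched partner is the penultimate vertex, so an $\oplus$-cycle can be undone. The only cosmetic differences are that you induct on $\v(H)$ rather than $|E(H)\setminus E(\mathcal Q)|$ and extract the cycle via the map $m$ on path indices instead of the paper's auxiliary digraph $J$ on edges.
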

    
    \begin{proof}[Proof of \cref*{claim:subcubicoplus}.]
        Write $\mathcal H=(H,A,B,\mathcal Q)$. We prove the statement by induction on $|E(H)\setminus E(\mathcal Q)|$.
        
        For the base case, if $|E(H)\setminus E(\mathcal Q)|=0$, then (3) implies that $A=V(\mathcal Q)$, and so necessarily $\mathcal H\simeq \mathcal H_0$.

        We now show the inductive step. Let $F$ be the set of edges in $E(H)\setminus E(\mathcal Q)$ incident to $B$. As $|E(H)\setminus E(\mathcal Q)|>0$, condition (2) implies that $A\neq B$, and so by (3) we have that $F\neq \emptyset$.  

        First suppose there exists an edge of $F$ with both ends in $B$, say $b_{i_1}'b_{i_2}'$, where $b_{i_1}'\in V(Q_{i_1})$ and $b_{i_2}'\in V(Q_{i_2})$. Let $\mathcal H'$ be the path system resulting from removing $b_{i_1}',b_{i_2}'$. More precisely, if $b_{i_1}$ and $b_{i_2}$ are, respectively, the neighbours of $b_{i_1}'$ in $Q_{i_1}$ and of $b_{i_2}'$ in $Q_{i_2}$ (these exist since $b_{i_1}',b_{i_2}'\notin A$), then $\mathcal H'=(H',A,B',\mathcal Q')$ where $H'=H\left[V(H)\setminus \{ b_{i_1}',b_{i_2}'\}\right]$, $B'=\left(B\setminus\{b_{i_1}',b_{i_2}'\}\right)\cup\{b_{i_1},b_{i_2}\}$ and $\mathcal Q'$ is identical to $\mathcal Q$ except that the edges $b_{i_1}b_{i_1}'$ and $b_{i_2}b_{i_2}'$ are removed from, respectively, $Q_{i_1}$ and $Q_{i_2}$. It is direct from the definitions that $\mathcal H=\mathcal H'\oplus \{i_1,i_2\}$. In particular, condition (3) implies that $b_{i_1}'b_{i_2}'$ were not incident to any edge other than $b_{i_1}b_{i_1}'$, $b_{i_2}b_{i_2}'$ and $b_{i_1}'b_{i_2}'$. Furthermore, $E(H')\setminus E(\mathcal Q')=(E(H)\setminus E(\mathcal Q))\setminus \{b_{i_1}'b_{i_2}'\}$. By induction, there exists a sequence $m_1,\dots,m_{k-1}\in \mathcal E\cup \mathcal C$ such that $\mathcal H
        '\simeq \mathcal H_0\oplus m_1 \oplus \dots \oplus m_{k-1}$. Hence, we obtain that $\mathcal H\simeq \mathcal H_0\oplus m_1 \oplus \dots \oplus m_{k-1}\oplus\{i_1,i_2\}$, as desired.

        Otherwise, we construct an auxiliary digraph $J$ with vertex set $F$ as follows. Let $c_{i_1}b_{i_2}', c_{i_3}b_{i_4}'\in F$, where $b_{i_2}'\in V(Q_{i_2})\cap B$ and $b_{i_4}'\in V(Q_{i_4})\cap B$, and $c_{i_1}\in V(Q_{i_1})\setminus B$ and $c_{i_3}\in V(Q_{i_3})\setminus B$. Further note that $i_1\neq i_2$ and $i_3\neq i_4$: these edges are not in $\mathcal Q$ and by (4) the paths of $\mathcal Q$ are necessarily induced. In our auxiliary digraph $J$, we put a directed edge from $c_{i_1}b_{i_2}$ to $c_{i_3}b_{i_4}$ in $J$ if and only if $i_2=i_3$.

        Every edge in $F$ has in-degree at least one in $J$ (in fact, it is necessarily exactly one). Indeed, let $c_{i_1}b_{i_2}'\in F$. Let $b_{i_1}'$ be the unique vertex of $V(Q_{i_1})\cap B$. As $c_{i_1}\notin B$, necessarily $b_{i_1}'\notin A$. Hence, by (3), there exists there is some edge $e\in F$ incident to $b_{i_1}'$. In particular, there is a directed edge in $J$ from $e$ to $c_{i_1}b_{i_2}'$.
        
        Hence, there exists in $J$ a directed cycle. By definition, the sequence of vertices in this directed cycle is of the form $c_{i_1}b_{i_2}',\dots,c_{i_k}b_{i_1}'$, where $c_{i_j}\in V(Q_{i_j})\setminus B$ and $b_{i_j}'\in V(Q_{i_j})\cap B$ for every $j\in [k]$. We claim $c_{i_j}b_{i_j}'\in E(Q_{i_j})$ for every $j\in [k]$. Suppose otherwise that for some $j\in [k]$ there exists at least one vertex $x$ between $c_{i_j}$ and $b_{i_j}$ on $Q_{i_j}$. Then, the following collection of paths would contradict (4): in $\mathcal Q$, replace the paths $Q_{i_1},\dots,Q_{i_k}$ with the paths formed by following $P_{i_j}$ until $c_{i_j}$ and then following the edge $c_{i_j}b_{i_{j+1}}'$ (with addition modulo $k$). In particular, this new set of paths does not contain $x$.

        Let $\mathcal H'$ be the path system resulting from removing $c_{i_j}$ and $b_{i_j}'$ for every $j\in [k]$. More precisely, if we write $b_{i_j}$ for the neighbour of $c_{i_j}$ in $V(Q_{i_j})$ which is not $b_{i_j}'$ (this vertex exists since $c_{i_j}\notin A$ by (2)), for every $j\in [k]$, then $\mathcal H'=(H',A,B',\mathcal Q')$ where where $H'=H\left[V(H)\setminus \{ c_{i_1},\dots,c_{i_k},b_{i_1}',\dots,b_{i_k}'\}\right]$, $B'=\left(B\setminus\{b_{i_1}',\dots,b_{i_k}'\}\right)\cup\{b_{i_1},\dots,b_{i_k}\}$ and $\mathcal Q'$ is identical to $\mathcal Q$ except that the edges $b_{i_j}c_{i_j}$ and $c_{i_j}b_{i_j}'$ are removed from $Q_{i_j}$, for every $j\in [k]$. Similarly to above, it is then direct from the definitions that $\mathcal H\simeq \mathcal H_0\oplus m_1 \oplus \dots \oplus m_{k-1}\oplus(i_1\ \dots \ i_k))$, and by induction, there exists a sequence $m_1,\dots,m_{k-1}\in \mathcal E\cup \mathcal C$ such that $\mathcal H
        '\simeq \mathcal H_0\oplus m_1 \oplus \dots \oplus m_{k-1}$. Hence, we obtain that $\mathcal H\simeq \mathcal H_0\oplus m_1 \oplus \dots \oplus m_{k-1}\oplus(i_1\ \dots \ i_k)$, as desired.
        \renewcommand{\qedsymbol}{$\blacksquare$}
    \end{proof}

    We define a \emph{state} as an unordered pair $\{S_1,S_2\}$ of non-empty disjoint subsets of $[5]$. Given a path system $\mathcal H=(H,A,B,\mathcal Q=(Q_1,Q_2,Q_3,Q_4,Q_5))$, we say a state $S=\{S_1,S_2\}$ is \emph{$\mathcal H$-reachable} if there exist sets $C_1,C_2\subseteq V(H)$ such that
    \begin{itemize}
        \item $C_1,C_2$ are disjoint and non-adjacent in $H$,
        \item $C_1\cap A\neq \emptyset$ and $C_2\cap A\neq \emptyset$, 
        \item $C_1\cap V(Q_i)\cap B \neq \emptyset$ if and only if $i\in S_1$ and $C_2\cap V(Q_i)\cap B \neq \emptyset$ if and only if $i\in S_2$, and
        \item $H[C_1],H[C_2]$ are connected.
    \end{itemize}

    \begin{claim}\label{claim:subcubicreachable}
        For every path system $\mathcal H$, there exists an $\mathcal H$-reachable state.
    \end{claim}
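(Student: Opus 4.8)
The plan is to prove the claim by induction using \cref{claim:subcubicoplus}: every path system $\mathcal{H}$ is isomorphic to $\mathcal{H}_0 \oplus m_1 \oplus \cdots \oplus m_k$ for some $m_1, \dots, m_k \in \mathcal{E} \cup \mathcal{C}$, and since $\mathcal{H}$-reachability is plainly invariant under isomorphism of path systems, we may induct on $k$ (equivalently, on $|E(H) \setminus E(\mathcal{Q})|$). The key point is that a single reachable state will not survive the induction, so instead I would track the entire set $\Sigma(\mathcal{H})$ of $\mathcal{H}$-reachable states and prove the stronger statement that $\Sigma(\mathcal{H}) \neq \emptyset$, by determining how $\Sigma$ changes under each $\oplus$-step.

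The base case $\mathcal{H} = \mathcal{H}_0$ is immediate: $H_0$ has no edges, so its only connected vertex sets are singletons, and one checks that $\Sigma(\mathcal{H}_0) = \{\{\{i\},\{j\}\} : i \neq j\}$; in particular the state $\{\{1\},\{2\}\}$ is witnessed by $C_1 = \{v_1\}$, $C_2 = \{v_2\}$, so $\Sigma(\mathcal{H}_0)\neq\emptyset$.

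For the inductive step write $\mathcal{H} = \mathcal{H}' \oplus m$ and let $(C_1', C_2')$ witness a state $\{S_1', S_2'\} \in \Sigma(\mathcal{H}')$. Applying $\oplus m$ attaches a small gadget to the $B$-ends of the paths $Q_i$ with $i$ occurring in $m$, turning each such old $B$-end into an internal vertex and creating a new $B$-end further along. I would case-analyse how $C_1'$ and $C_2'$ meet these affected $B$-ends. In the benign cases --- the gadget is untouched by $C_1' \cup C_2'$, or the $B$-ends it touches all lie in one part $S_i'$, or only one part touches it --- one extends $C_1'$ and/or $C_2'$ along the relevant edges of the gadget, keeping the sets disjoint, connected and non-adjacent (a newly added vertex placed into one set has its other new neighbours kept out of the opposite set), and reads off the resulting state of $\mathcal{H}$. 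The troublesome case is when $C_1'$ reaches the $B$-end of some $Q_i$ and $C_2'$ reaches the $B$-end of some $Q_j$ with $i,j$ on ``opposite sides'' of the gadget: no extension can realise both new $B$-ends at once without creating an edge between $C_1$ and $C_2$, so we are forced to drop $i$ from $S_1'$, or $j$ from $S_2'$, or both, and the danger is that $S_1'$ or $S_2'$ becomes empty. This is exactly where a single reachable state does not suffice: if the affected part has size at least $2$ the removal is harmless, and otherwise we instead start the construction from a different state in $\Sigma(\mathcal{H}')$, which is why tracking the whole set $\Sigma$ is essential.

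Carrying this analysis through in both directions shows that $\Sigma(\mathcal{H} \oplus m)$ is an explicit function of $\Sigma(\mathcal{H})$ and the ``type'' of $m$. Up to permuting the five path indices there are only finitely many types ($\oplus\{i,j\}$, and $\oplus$ of a $k$-cycle for $k \in \{2,3,4,5\}$), and only finitely many possible values of $\Sigma$, so it remains to compute, by a finite search started from $\Sigma(\mathcal{H}_0)$, the closure of this transition system and to verify that every set of states occurring in it is nonempty; this is the computer-assisted part. I expect the main obstacle to be precisely the ``opposite-sides'' case above, together with making the bookkeeping of the transition functions airtight, which is why the final verification is delegated to a computer. (Running the same computation with four paths in place of five would instead reach an empty set of states, matching part~(a) of the theorem, so the bound of five is used in an essential way.)
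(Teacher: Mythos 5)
Your proposal is essentially the paper's proof: it likewise tracks collections of reachable states rather than single states, defines for each operation $m\in\mathcal E\cup\mathcal C$ a transition on such collections (the paper's $f$ and $g$, computed by gluing the gadget $\mathcal H_0\oplus m$ onto $\mathcal H$ along $B$), and then verifies by a finite, symmetry-reduced, computer-assisted search starting from $\Sigma(\mathcal H_0)$ that the empty collection is never reached, exactly as the paper does via its notion of descendants of $\mathcal S_0$. The one caveat is your claim that $\Sigma(\mathcal H\oplus m)$ is an exact function of $\Sigma(\mathcal H)$ (``both directions''): this is neither needed nor clearly true, since a witness in $\mathcal H\oplus m$ may be connected only through the new gadget and hence restrict to a disconnected set in $H$, but as only the sound direction (every state produced by the transition is genuinely reachable) is used to conclude nonemptiness, the argument goes through just as in the paper.
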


    \begin{proof}[Proof of \cref*{claim:subcubicreachable}.]
        Let $S=\{S_1,S_2\}$ be a state and $m\in \mathcal E\cup \mathcal C$. We construct a collection of states $f(S,m)$ by saying $S'\in f(S,m)$ if and only if $S'$ can be written as $S'=\{S_1,S_2\}$ such that the following holds: writing, $\mathcal H_0\oplus m=(H_m,A_m,B_m,\mathcal Q_m)$, there there exists $C_1,C_2\subseteq V(H_m)$ such that
        \begin{itemize}
            \item $C_1,C_2$ are disjoint and non-adjacent in $H_m$,
            \item $C_1\cap V(Q_i)\cap A_m$ if and only if $i\in S_1$ and $C_2\cap V(Q_i)\cap A_m$ if and only if $i\in S_2$,
            \item $C_1\cap V(Q_i)\cap B_m \neq \emptyset$ if and only if $i\in S_1'$ and $C_2\cap V(Q_i)\cap B_m \neq \emptyset$ if and only if $i\in S_2'$,
            \item for every vertex of $C_1\cap B_m$, its connected component in $H[C_1]$ contains a vertex of $C_1\cap A_m$, and for every vertex of $C_2\cap B_m$, its connected component in $H[C_2]$ contains a vertex of $C_2\cap A_m$.
        \end{itemize}

        It is easily verified that for any path system $\mathcal H$, if $S$ is $\mathcal H$-reachable, then every state in $f(S,m)$ is $\mathcal H\oplus m$-reachable. The crucial observation is that $\mathcal H\oplus m$ can be obtained from $\mathcal H$ and $\mathcal H_0\oplus m$ by identifying the vertices in the $B$ set from the former and the vertices from the $A$ set in the latter. From this, these conditions are exactly those which allow us to extend the non-adjacent sets $C_1,C_2$ corresponding to states in $\mathcal H$ to sets in $\mathcal H\oplus m$.

        If $\mathcal S$ is a collection of states, let $g(\mathcal S,m)=\bigcup_{S\in \mathcal S}f(\mathcal S,m)$. Hence, for any $\mathcal H$, if $\mathcal S$ is a collection of $\mathcal H$-reachable states, then $g(\mathcal S,m)$ is a collection of $\mathcal H\oplus m$-reachable states. We say a collection of states $\mathcal S'$ is a \emph{descendant} of $\mathcal S$ if there exists some sequence $m_1,\dots,m_k\in \mathcal E\cup \mathcal C$ such that $\mathcal S'=g(\dots g(g(\mathcal S,m_1),m_2)\dots,m_k)$.

        Let $\mathcal S_0:=\{\{\{x\},\{y\}\}:x,y \in [5], x \neq y\}$. It is direct from the definition that every state in $\mathcal S_0$ is $\mathcal H_0$-reachable. By \cref{claim:subcubicoplus}, every $\mathcal H$ can be written as $\mathcal H=\mathcal G_0\oplus m_1\oplus\dots\oplus m_k$ for some sequence of $m_1,\dots,m_k\in \mathcal E\cup\mathcal C$. In particular, $g(\dots g(g(\mathcal S_0,m_1),m_2)\dots,m_k)$ is a collection of $\mathcal H$-reachable states. We want to show that this collection is non-empty. In order to prove the claim, it thus suffices to show that $\emptyset$ is not a descendant of $\mathcal S_0$. Our strategy is thus as follows : start with the collection $\mathcal S_0$, and then repeatedly apply $g(\cdot,m)$ for every $m\in \mathcal E\cup \mathcal C$ until no new collection of states are found. If the $\emptyset$ is never encountered, we are done.

        Along the way, we may in fact trim some branches off of this process, in the three following ways:
        \begin{enumerate}[label=(\roman*)]
            \item If $\mathcal S$ is a collection of $\mathcal H$-reachable states and $\sigma$ is a permutation of $[5]$, then $\sigma(\mathcal S)$, which is obtained by applying $\sigma$ to the elements in $\mathcal S$, is $\mathcal H'$-reachable, where $\mathcal H'$ is identical to $\mathcal H'$ except that  we have permuted the order of the paths of $\mathcal Q$ according to $\sigma$.
            
             \noindent If $g(\dots g(g(\mathcal \sigma (S),m_1),m_2)\dots,m_k)=\emptyset$, then $g(\dots g(g(\mathcal S',\sigma^{-1}(m_1)),\sigma^{-1}(m_2))\dots,\sigma^{-1}(m_k))$ is empty, and so if $\emptyset$ is not a descendant of $\mathcal S$, it is not a descendant of $\sigma(\mathcal S)$ either. In particular, we need only keep one collection of states from each equivalence class under permutation.
            \item If $\mathcal S\subseteq \mathcal S'$ are collections of states, then $g(\mathcal S,m)\subseteq g(\mathcal S',m)$. In particular, if $\emptyset$ is not a descendant of $\mathcal S$, it is not a descendent of $\mathcal S'$ either. Hence, during our searching process, we may throw out any collection of states which is not minimal.
            \item We do not need to use the definition of $f$ to compute every instance of $f(S,m)$. Indeed, similarly to (i), it is easily verified from the definitions that if $\sigma$ is a permutation of $[5]$, then $f (S, m)=\sigma^{-1} (f (\sigma (S), \sigma(m)))$, so we only need to compute $f$ for one pair $(S,m)$ in every equivalence class.
        \end{enumerate}

        We have implemented this approach in Mathematica \cite{wolfram_research_inc_mathematica_nodate}, the code is provided in \cite{hendrey_code_2023}. This code is fully commented, consult these for further implementation details. As a benchmark, this script can run in under 14 minutes on a 2020 MacBook Air with M1 chip and 16 GB ram running Mathematica 13.0.0.0.
        \renewcommand{\qedsymbol}{$\blacksquare$}
    \end{proof}

    In particular, there exists some $\mathcal G$-reachable state. This state is a certificate of the existence of two non-adjacent sets $C_1,C_2$ which induce connected graphs and both intersect $X$ and $Y$. From these, we may extract two non-adjacent $X\dash Y$-paths, as desired. This concludes the proof of the first part of the statement.

    We now show that this result is best possible, in two ways.

    First, consider the statement when we are given  four paths instead of five. By modifying the approach used above to prove the statement for five paths so as to it also taking into account paths which ``backtrack'', we were able to find a counter-example to this modified statement; it is shown in \cref{fig:4pathcounterexample}. A short Mathematica script which verifies that no pair of non-adjacent $X\dash Y$-paths exists in this graph is provided in \cite{hendrey_code_2023}.

    \begin{figure}
         \centering
         \begin{tikzpicture}[scale=0.75,
			dot/.style = {circle, fill, minimum size=#1,
			inner sep=0pt, outer sep=0pt},
			dot/.default = 4pt]

            \draw[rounded corners=5pt] (-0.5, 0.5) rectangle (0.5, 4.5) {};
            \node at (0,0.15) {$X$};

            \draw[rounded corners=5pt] (16.5, 0.5) rectangle (17.5, 4.5) {};
            \node at (17,0.15) {$Y$};
            
            \node[dot] (A0) at (0,4) {};
            \node[dot] (B0) at (0,3) {};
            \node[dot] (C0) at (0,2) {};
            \node[dot] (D0) at (0,1) {};

            \node[dot] (A1) at (1,4) {};
            \node[dot] (A2) at (3,4) {};
            \node[dot] (A3) at (4,4) {};
            \node[dot] (A4) at (6,4) {};
            \node[dot] (A5) at (9,4) {};
            \node[dot] (A6) at (10,4) {};
            \node[dot] (A7) at (12,4) {};
            \node[dot] (A8) at (15,4) {};

            \node[dot] (B1) at (1,3) {};
            \node[dot] (B2) at (5,3) {};
            \node[dot] (B3) at (6,3) {};
            \node[dot] (B4) at (8,3) {};
            \node[dot] (B5) at (11,3) {};
            \node[dot] (B6) at (12,3) {};
            \node[dot] (B7) at (14,3) {};
            \node[dot] (B8) at (16,3) {};

            \node[dot] (C1) at (2,2) {};
            \node[dot] (C2) at (3,2) {};
            \node[dot] (C3) at (5,2) {};
            \node[dot] (C4) at (7,2) {};
            \node[dot] (C5) at (10,2) {};
            \node[dot] (C6) at (11,2) {};
            \node[dot] (C7) at (13,2) {};
            \node[dot] (C8) at (16,2) {};

            \node[dot] (D1) at (2,1) {};
            \node[dot] (D2) at (4,1) {};
            \node[dot] (D3) at (7,1) {};
            \node[dot] (D4) at (8,1) {};
            \node[dot] (D5) at (9,1) {};
            \node[dot] (D6) at (13,1) {};
            \node[dot] (D7) at (14,1) {};
            \node[dot] (D8) at (15,1) {};

            \node[dot] (A9) at (17,4) {};
            \node[dot] (B9) at (17,3) {};
            \node[dot] (C9) at (17,2) {};
            \node[dot] (D9) at (17,1) {};

            \draw (A0) to (A1);
            \draw (A1) to (A2);
            \draw (A2) to (A3);
            \draw (A3) to (A4);
            \draw (A4) to (A5);
            \draw (A5) to (A6);
            \draw (A6) to (A7);
            \draw (A7) to (A8);
            \draw (A8) to (A9);

            \draw (B0) to (B1);
            \draw (B1) to (B2);
            \draw (B2) to (B3);
            \draw (B3) to (B4);
            \draw (B4) to (B5);
            \draw (B5) to (B6);
            \draw (B6) to (B7);
            \draw (B7) to (B8);
            \draw (B8) to (B9);

            \draw (C0) to (C1);
            \draw (C1) to (C2);
            \draw (C2) to (C3);
            \draw (C3) to (C4);
            \draw (C4) to (C5);
            \draw (C5) to (C6);
            \draw (C6) to (C7);
            \draw (C7) to (C8);
            \draw (C8) to (C9);

            \draw (D0) to (D1);
            \draw (D1) to (D2);
            \draw (D2) to (D3);
            \draw (D3) to (D4);
            \draw (D4) to (D5);
            \draw (D5) to (D6);
            \draw (D6) to (D7);
            \draw (D7) to (D8);
            \draw (D8) to (D9);

            \draw (A1) to (B1);
            \draw (C1) to (D1);
            \draw (A2) to (C2);
            \draw (A3) to (D2);
            \draw (B2) to (C3);
            \draw (A4) to (B3);
            \draw (C4) to (D3);
            \draw (B4) to (D4);
            \draw (A5) to (D5);
            \draw (A6) to (C5);
            \draw (B5) to (C6);
            \draw (A7) to (B6);
            \draw (C7) to (D6);
            \draw (B7) to (D7);
            \draw (A8) to (D8);
            \draw (B8) to (C8);

		\end{tikzpicture}
        \caption{Counter-example (a) in \cref{thm:twopathssubcubic}.}
        \label{fig:4pathcounterexample}
    \end{figure}

    Secondly, consider the statement if we replace the condition that every vertex in $V(\mathcal P)$ is incident to at most one edge in $E(G)\setminus E(\mathcal P)$ by the condition that the maximum degree is three. Of course, the difference only concerns vertices in $X$ and in $Y$, given that these vertices are the only ones with fewer than two neighbours in their path. The graph formed by a adding a matching between a five-cycle (with vertices $X$) and the complement of a copy of this five-cycle (with vertices $Y$), as shown in \cref{fig:5pathcounterexample}, is easily verified to not contain any pair of non-adjacent edges between $X$ and $Y$.\end{proof}

    \begin{figure}
         \centering
         \begin{tikzpicture}[scale=0.75,
			dot/.style = {circle, fill, minimum size=#1,
			inner sep=0pt, outer sep=0pt},
			dot/.default = 4pt]

            \draw[rounded corners=7pt] (0.25, 0.5) rectangle (1.5, 5.5) {};
            \node at (0.85,0.15) {$X$};

            \draw[rounded corners=7pt] (3.5, 0.5) rectangle (4.75, 5.5) {};
            \node at (4.15,0.15) {$Y$};
            
            \node[dot] (X1) at (1,1) {};
            \node[dot] (X2) at (1,2) {};
            \node[dot] (X3) at (1,3) {};
            \node[dot] (X4) at (1,4) {};
            \node[dot] (X5) at (1,5) {};

            \node[dot] (Y1) at (4,1) {};
            \node[dot] (Y2) at (4,2) {};
            \node[dot] (Y3) at (4,3) {};
            \node[dot] (Y4) at (4,4) {};
            \node[dot] (Y5) at (4,5) {};

            \draw (X1) to (Y1);
            \draw (X2) to (Y2);
            \draw (X3) to (Y3);
            \draw (X4) to (Y4);
            \draw (X5) to (Y5);

            \draw (X1) to (X2);
            \draw (X2) to (X3);
            \draw (X3) to (X4);
            \draw (X4) to (X5);
            \draw[bend right=23] (X5) to (X1);

            \draw[bend right] (Y1) to (Y3);
            \draw[bend right] (Y1) to (Y4);
            \draw[bend right] (Y2) to (Y4);
            \draw[bend right] (Y2) to (Y5);
            \draw[bend right] (Y3) to (Y5);

		\end{tikzpicture}
        \caption{Counter-example (b) in \cref{thm:twopathssubcubic}.}
        \label{fig:5pathcounterexample}
    \end{figure}

\bibliography{refs}
\bibliographystyle{abbrvurl}
\end{document}